\renewcommand{\div}{\operatorname{div}}
{\newtheorem{thm}{Theorem}[section]}
{\newtheorem{prop}[thm]{Proposition}}
{\newtheorem{coro}[thm]{Corollary}}
{\newtheorem{lemme}[thm]{Lemma}}
{}
{\newtheorem{rem}{Remark}[section]}
\newcommand{\Lip}{\text{Lip}}
\author{Amina MECHERBET$^{\dagger}$}
\title{On the sedimentation of a droplet in Stokes flow}
\begin{document}

\footnotetext[2]{Universit\'e de Paris, Institut de Math\'ematiques de Jussieu-Paris Rive Gauche (UMR 7586), F-75205, Paris, France. Email address:  {mecherbet@imj-prg.fr}
}

         \begin{abstract}
              This paper is dedicated to the analysis of a mesoscopic model which describes sedimentation of inertialess suspensions in a viscous flow at mesoscopic scaling. The paper is divided into two parts, the first part concerns the analysis of the transport-Stokes model including a global existence and uniqueness result for $L^1\cap L^\infty$ initial densities with finite first moment. We investigate in particular the case where the initial condition is the characteristic function of the unit ball and show that we recover Hadamard-Rybczynski result, that is, the spherical shape of the droplet is preserved in time.
In the second part of this paper, we derive a surface evolution model in the case where the initial shape of the droplet is axisymmetric. We obtain a 1D hyperbolic equation including non local operators that are linked to the convolution formula with respect to the singular Green function of the Stokes equation. We present a local existence and uniqueness result and show that we recover the Hadamard-Rybczynski result as long as the modelling is well defined and finish with numerical simulations in the spherical case.
          \end{abstract}
          \maketitle
%

\section{Introduction}
In this paper we focus on the problem related to the sedimentation of rigid particles in a fluid where both particles and fluid inertia are neglected. It has been showed in \cite{Hofer,mecherbet,JO,HS} that the limit model for sedimentation when the number of particles tends to infinity while their radius tends to zero is given by the following transport-Stokes model
\begin{equation}\label{Vlasov_Stokes_kappa}
\left\{
\begin{array}{rcll}
\partial_t \rho+{\div}( (u+\kappa g) \rho ) &=& 0\,,& \text{on $\mathbb{R}^+ \times \mathbb{R}^3,$}\\
- \Delta u+ \nabla p &=& 6\pi r_0 \kappa \rho g  \,,& \text{on $\mathbb{R}^+ \times \mathbb{R}^3$},\\
\div u & =&0\,,& \text{on $\mathbb{R}^+ \times \mathbb{R}^3$},\\
\underset{|x|\to \infty}{\lim}|u| &=& 0,& \text{on } \mathbb{R}^+,\\
\rho(0,\cdot) &= & \rho_0 \,,&\text{on $ \mathbb{R}^3$}.
\end{array}
\right.
\end{equation}
Here, $\rho$ stands for the probability density function of the particles, $(u,p)$ are the velocity and pressure of the fluid, $g$ is the gravity vector,  $R=\frac{r_0}{N}$ is the radius of the particles where $N$ the (large) number of particles in the cloud and $\kappa g= \frac{2}{9} {R^2}(\bar{\rho}-\rho)g$ represents the fall speed of one particle sedimenting under gravitational force. Note in particular that the source term in the Stokes equation corresponds to $6\pi r_0\kappa g \rho = N \frac{4}{3}\pi R^3(\rho_p-\rho_f) g \rho = \phi  (\rho_p-\rho_f)g\rho $ where $\phi$ is the solid volume fraction of the suspension in the case $ |\text{supp } \rho| =1$.\\

A first quantitative estimate for the convergence in a mean-field setting using the first-Wasserstein metrics has been proved in \cite{mecherbet} under assumptions on the minimal distance between particles and for $r_0=RN$ small enough. Precisely, for all $T>0$ it is shown that there exists $N_0\in \mathbb{N}$ and two constants $C_1,C_2$ such that for all $N\geq N_0$ and $t\leq T$ we have
\begin{equation}\label{convergence}
W_1(\rho^N(t,\cdot),\rho(t,\cdot)) \leq C_1(o_N(1)+ W_1(\rho^N_0,\rho_0))e^{C_2t},
\end{equation}
where $\rho^N(t,\cdot)= \frac{1}{N} \underset{i}{\sum} \delta_{x_i(t)}$ is the empirical measure describing the cloud of particles at microscopic scaling having positions $(x_i(t))_{1\leq i \leq N}$ at time $t$. We refer also to \cite{HS} for a more recent mean-field analysis including the effective viscosity approximation in the modelling under different assumptions on the separation between the particles. In particular, \eqref{convergence} shows that the limit ``dispersed droplet" described by $\rho$ and the discrete cloud of particles described by $\rho^N$ have a similar behaviour on a finite time interval if they are close enough initially. We emphasize that \eqref{Vlasov_Stokes_kappa} can be seen as an inertialess version of the Vlasov Navier-Stokes equation which are used in practice to describe sprays in fluid-kinetic theory where the volume fraction of the suspension is small but not negligible, see
\cite{BDGM,BGLM15,Moussa} and the references therein.

The problem related to the shape evolution of a falling suspension drop in a viscous fluid has attracted a lot of attention. Numerical simulations and physical experiments in laboratory were carried out; the numerical simulations consists in solving numerically the trajectories of a large number of particles whereas the physical experiments consists in injecting a viscous liquid inside a lighter viscous liquid and track the shape evolution of the falling droplet. In \cite{M&al,NB,BKHM} authors claim that the behaviour of the falling drop depends on its initial shape and that an initially spherical cloud
retains a roughly spherical shape while settling at low Reynolds number whereas they observe a torus formation if the initial shape deviates from the spherical one. 
Precisely, the particles at top of
the cloud leak away from the cluster and form a vertical tail. The decrease of the number of particles at the vertical axis of the cloud leads to the apparition of the toroidal form. In the paper \cite{MNG} authors emphasize that the torus instability occurs also in the case where the initial shape of the blob is spherical and explain that it is a slow process and is likely to happen for a large number of particles which explains why it has not been detected in the former papers.
It is also important to emphasize that it is possible to represent the cloud as an effective medium of excess
mass and the flow system related to that of the sedimentation of a spherical drop of heavy fluid in an otherwise lighter fluid solved by Hadamard \cite{Hadamard} and Rybczynski \cite{rybczynski} in 1911. This macroscopic representation corresponds to a coupled Stokes-Stokes model describing sedimentation of a viscous spherical drop in a viscous fluid assuming a uniform surface tension on the sphere. Using the Stokes stream function for axisymmetric flow, authors show that the spherical shape of the drop is preserved. We refer to \cite{MNG} for more details on the comparison with the numerical simulations and the departure of the trajectories of the particles from Hadamard–Rybczynski streamlines. In particular, in \cite{MNG}, authors explain that the fundamental difference of behaviour while solving numerically a large system for the particle trajectories is due to fluctuations in particle velocity.

In this paper we are interested in providing an analysis for the transport-Stokes model. First we present a global existence and uniqueness result for the transport-Stokes equation for $L^1\cap L^\infty$ initial densities with finite first moment allowing to tackle the case where the density is the characteristic function of a smooth bounded domain. We show in particular that we recover the Hadamard and Rybczynski result in the case where the initial domain is spherical. We emphasize that, to our knowledge, this is the first time that a comparison between the Hadamard-Rybczynski result and the transport-Stokes model is investigated.

The second motivation is to derive a model describing the surface evolution of a falling droplet and provide an analysis of the solution.
The obtained model is a hyperbolic equation describing the evolution of the surface of axisymmetric drop $B_0$. The advantage of such a model for the surface evolution is that the equations are reduced to a 1D problem on a bounded interval which reduces in particular the numerical issues in comparison to the Stokes problem on $\mathbb{R}^3$. We present a local existence and uniqueness result for the hyperbolic model as well as a comparison to the Hadamard-Rybczynski result and finish by providing numerical simulations in this case.

\section{Statement of the main results}
The main results of this paper are divided into two parts that we make precise in this section. The first part deals with the analysis of the transport-Stokes model while the second part deals with the derivation of a surface evolution model and its analysis. As a consequence, the reader only interested in the analysis of the transport-Stokes model and its comparison to the Hadamard-Rybczynski result may focus on section 3.
\subsection{Analysis of the transport-Stokes model}
\subsubsection{Global existence and uniqueness result}
 Existence and uniqueness of \eqref{Vlasov_Stokes_kappa} has been proved in \cite{Hofer} for regular initial data $\rho_0$. The first step of this study is to extend the result for less regular data allowing to tackle blob distribution. 
Note that, as explained in \cite{Hofer}, if $(\rho,u)$ are solutions to equation \eqref{Vlasov_Stokes_kappa}, then $$(\tilde{\rho}(t,x),\tilde{u}(t,x))=(\rho(t,x+t \kappa g),u(t,x+t\kappa g)),$$  is solution to
\begin{equation*}
\left\{
\begin{array}{rcll}
\partial_t \rho+{\div}( \rho u) &=& 0\,,& \text{on $\mathbb{R}^+ \times \mathbb{R}^3,$}\\
- \Delta u+ \nabla p &=&6\pi r_0 \kappa \rho g\,,& \text{on $\mathbb{R}^+ \times \mathbb{R}^3,$}\\
\div u & =&0\,,& \text{on $\mathbb{R}^+ \times \mathbb{R}^3,$}\\
\rho(0,\cdot) &= & \rho_0\,,& \text{on $\mathbb{R}^3.$}
\end{array}
\right.
\end{equation*}
Since $6\pi r_0 \kappa g = -6\pi r_0 \kappa |g| e_3$, without loss of generality, we consider in this paper the following transport-Stokes problem:
\begin{equation}\label{Vlasov_Stokes}
\left\{
\begin{array}{rcll}
\partial_t \rho+{\div}(  \rho u) &=& 0\,,& \text{on $\mathbb{R}^+ \times \mathbb{R}^3,$}\\
- \Delta u+ \nabla p &=& -\rho e_3\,,& \text{on $\mathbb{R}^+ \times \mathbb{R}^3,$}\\
\div u & =&0\,,& \text{on $\mathbb{R}^+ \times \mathbb{R}^3,$}\\
\rho(0,\cdot) &= & \rho_0\,,& \text{on $\mathbb{R}^3.$}
\end{array}
\right.
\end{equation}
where $e_3$ is the third vector of the standard basis in $\mathbb{R}^3$.\\
The first result is a proof of existence and uniqueness of solutions for the transport-Stokes problem.
\begin{thm}\label{thm}
Let $\rho_0 \in L^1(\mathbb{R}^3) \cap L^\infty(\mathbb{R}^3)$ a measure with finite first moment. There exits a unique couple $(\rho,u) \in L^\infty(0,T; L^1(\mathbb{R}^3) \cap L^\infty(\mathbb{R}^3))\times  L^\infty(0,T; W^{1,\infty}(\mathbb{R}^3))$ satisfying the transport-Stokes equation 
\eqref{Vlasov_Stokes} for all $T\geq 0$. Moreover, 
for all $s\in[0,T]$ there exists a unique characteristic flow $X(\cdot,s,\cdot)\in L^\infty(0,T,W^{1,\infty}(\mathbb{R}^3))$ 
\begin{equation*}
\left \{
\begin{array}{rcll}
\partial_t X(t,s,x) & = & u(s, X(t,s,x)),&\: \: \forall \, t,s\in[0,T] , \\
X(s,s,x) & = & x,& \: \: \forall \, s \in [0,T] ,
\end{array}
\right.
\end{equation*}
For all $s,t \in [0,T]$ the diffeomorphism $X(s,t,\cdot)$ is measure preserving and we have
$$
\rho(t,\cdot)=X(t,0,\cdot) \# \rho_0.
$$
\end{thm}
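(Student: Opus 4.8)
The plan is to decouple the system into a linear Stokes solve and a linear transport solve, and then to close the loop by a fixed-point argument on the characteristic flow. The Stokes system with divergence-free constraint and decay at infinity is solved explicitly by convolution with the Oseen tensor (the fundamental solution of the Stokes operator) $\Phi(x)=\frac{1}{8\pi}\left(\frac{I}{|x|}+\frac{x\otimes x}{|x|^3}\right)$, so that $u=\Phi*(-\rho e_3)$. The first task is to prove the estimate $\|u\|_{W^{1,\infty}(\mathbb{R}^3)}\le C(\|\rho\|_{L^1}+\|\rho\|_{L^\infty})$. This is where the dimension helps decisively: since $|\Phi(z)|\sim|z|^{-1}$ and $|\nabla\Phi(z)|\sim|z|^{-2}$ are both locally integrable in $\mathbb{R}^3$ and decay at infinity, splitting each convolution into a near-field part (controlled by $\|\rho\|_{L^\infty}$ against the integrable singularity) and a far-field part (controlled by $\|\rho\|_{L^1}$) yields the bound with no logarithmic loss. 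I would record at the same time a stability estimate controlling $\|u_1-u_2\|_{L^\infty}$ in terms of a transport distance between $\rho_1$ and $\rho_2$.

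Second, for a fixed field $u\in L^\infty(0,T;W^{1,\infty})$ I would invoke Cauchy--Lipschitz to build the flow $X$ solving $\dot X(t)=u(t,X(t))$, which is then a bi-Lipschitz diffeomorphism of $\mathbb{R}^3$; since $\div u=0$, Liouville's theorem gives $\det\nabla X\equiv 1$, so $X$ is measure preserving, and $\rho(t):=X(t,0,\cdot)\#\rho_0$ is the unique solution of the transport equation, with $\|\rho(t)\|_{L^p}=\|\rho_0\|_{L^p}$ for every $p$. The linear growth $|X(t,0,x)-x|\le t\,\|u\|_{L^\infty}$ keeps the first moment finite, growing at most linearly in time, which is what legitimizes the Wasserstein framework used for uniqueness.

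Third, to handle the coupling I would set up the fixed-point map sending a flow $X$ to the flow of the field $u[X]=\Phi*(-(X\#\rho_0)e_3)$, so that a solution is exactly a fixed point. The uniform bounds from the first two steps --- conservation of $\|\rho\|_{L^1\cap L^\infty}$, and hence a time-uniform bound on $\|u\|_{W^{1,\infty}}$ --- are the crucial structural point preventing blow-up: they make the a priori estimates autonomous, so a short-time contraction furnished by the Stokes stability estimate can be iterated on intervals of fixed length to reach an arbitrary $T\ge 0$. This gives global existence, together with the asserted representation $\rho(t)=X(t,0,\cdot)\#\rho_0$ and the measure-preserving property.

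The main obstacle is the uniqueness (equivalently contraction) estimate, because although $\Phi$ and $\nabla\Phi$ convolve nicely against $L^1\cap L^\infty$ densities, the map $\rho\mapsto u$ is not obviously Lipschitz for a transport distance: bounding $|\Phi(x-X_1(y))-\Phi(x-X_2(y))|$ naively costs a factor $\sup|\nabla\Phi|$, which is infinite at the singularity. I would resolve this by the same near/far splitting, writing the difference as one integral against $d\rho_0$ over the two pushforwards and absorbing the singular contributions through the $L^\infty$ bound on the transported density; the key computation is that in $\mathbb{R}^3$ the truncated integral $\int_{\delta<|z|<1}|z|^{-2}\,dz$ stays bounded as $\delta\to0^+$, so one expects a genuinely Lipschitz control $\|u_1(t)-u_2(t)\|_{L^\infty}\le C\int_{\mathbb{R}^3}|X_1(t,0,y)-X_2(t,0,y)|\,d\rho_0(y)$ (possibly up to a harmless logarithmic factor if the borderline region is treated conservatively). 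Feeding this into $\frac{d}{dt}|X_1-X_2|\le\|\nabla u_1\|_{L^\infty}|X_1-X_2|+\|u_1-u_2\|_{L^\infty}$, integrating against $\rho_0$, and applying Gr\"onwall (or Osgood's lemma in the logarithmic case) closes both uniqueness and the contraction, completing the argument.
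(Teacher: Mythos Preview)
Your approach is sound but differs from the paper's. The paper does \emph{not} run a fixed-point argument; instead it mollifies $\rho_0$ to obtain smooth data $\rho_0^n$, invokes H\"ofer's earlier result (Theorem~\ref{thm_Hofer}) to produce solutions $(\rho^n,u^n)$, and then shows $(\rho^n)$ is Cauchy in $L^\infty(0,T;\mathcal{P}_1)$ for the Wasserstein distance $W_1$ via the stability estimates of Propositions~\ref{stab_Stokes} and~\ref{stab_transport}. The limit is identified using the $L^1_{\mathrm{loc}}$ convergence of $u^n$ and $\nabla u^n$. The paper in fact remarks that a direct fixed-point argument is possible, which is precisely your route; your version is more self-contained (no black-box theorem for smooth data), while the paper's version yields Wasserstein stability with respect to initial data as a by-product, which is natural given the mean-field context of~\eqref{convergence}.

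One point in your contraction step deserves care. The pointwise bound $\|u_1-u_2\|_{L^\infty}\le C\int|X_1-X_2|\,d\rho_0$ is \emph{false} in general: taking $\rho_2$ to differ from $\rho_1$ only on a set of small measure $\epsilon$ produces $W_1\sim\epsilon$ but $\|u_1-u_2\|_{L^\infty}\sim\epsilon^{2/3}$, so the ratio blows up. Two clean fixes are available. Either close the Gr\"onwall on $\|X_1-X_2\|_{L^\infty(\mathrm{supp}\,\rho_0)}$ directly, since your near/far splitting does give $\|u_1-u_2\|_{L^\infty}\le C\|X_1-X_2\|_{L^\infty(\mathrm{supp}\,\rho_0)}$ (here the crucial integrability of $|z|^{-2}$ against an $L^1\cap L^\infty$ density is exactly what you identified); or keep the integrated quantity but replace $\|u_1-u_2\|_{L^\infty}$ by $\int|u_1-u_2|\,d\rho_2$, which \emph{is} controlled by $W_1(\rho_1,\rho_2)$ --- this is formula~\eqref{formule2} in the paper. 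Either way the argument closes without any logarithmic loss; Osgood is not needed here.
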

\begin{rem}
A similar well-posedness result has been shown recently in \cite{HS}.
\end{rem}
\begin{rem}
Since $\rho \in L^\infty( 0,T; L^p(\mathbb{R}^3))$ for all $p\in[1,+\infty] $ this ensures in particular that $u\in L^\infty(0,T, W^{2,p}(\mathbb{R}^3))$ for all $p>3$, see \cite[Theorem IV.2.1]{Galdi}, which yields $u \in L^\infty( 0,T;\mathcal{C}^{1,\mu}(\mathbb{R}^3))$ for any $0<\mu<1$.
\end{rem}

This result ensures the well-posedness of the transport-Stokes equation when the initial density is the characteristic function of a smooth bounded domain $B_0\subset \mathbb{R}^3$. In particular we have 
\begin{coro}\label{coro_regularite_surface}
Let $\rho_0=1_{B_0}$ with $B_0$ a smooth bounded domain such that $\Gamma_0:= \partial B_0 \in \mathcal{C}^{0,1}$ (resp. $\partial B_0 \in \mathcal{C}^{1,\mu}$ for some $0<\mu<1$). Then the surface regularity is propagated in time for all $T>0$ : for all $t \geq 0$, $\Gamma_t:=\partial B_t= X(t,0,\partial B_0)  \in \mathcal{C}^{0,1}$ (resp. $\partial B_0 \in \mathcal{C}^{1,\mu}$ for some $0<\mu<1$).  
\end{coro}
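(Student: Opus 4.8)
The plan is to transfer the regularity question from the surface $\Gamma_t$ to the flow map $X(t,0,\cdot)$, using that $\Gamma_t = X(t,0,\Gamma_0)$ and that, by Theorem \ref{thm}, $X(t,0,\cdot)$ is a measure-preserving diffeomorphism with inverse $X(0,t,\cdot)$. The underlying mechanism is the elementary fact that a bi-Lipschitz (resp. $\mathcal{C}^{1,\mu}$) homeomorphism $\Phi$ of $\mathbb{R}^3$ maps a Lipschitz (resp. $\mathcal{C}^{1,\mu}$) hypersurface onto a hypersurface of the same class: if $\Sigma$ is locally the graph of a Lipschitz (resp. $\mathcal{C}^{1,\mu}$) function, so is $\Phi(\Sigma)$. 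In the $\mathcal{C}^{1,\mu}$ case this follows by writing $\Sigma$ locally as a zero level set of a $\mathcal{C}^{1,\mu}$ function with non-vanishing gradient, composing with $\Phi^{-1}$, and applying the implicit function theorem, the composition again being $\mathcal{C}^{1,\mu}$ with non-vanishing gradient. Granting this, the whole statement reduces to establishing the corresponding regularity of $X(t,0,\cdot)$, uniformly for $t\in[0,T]$.

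The Lipschitz case is then immediate from Theorem \ref{thm}: since $X(\cdot,s,\cdot)\in L^\infty(0,T;W^{1,\infty}(\mathbb{R}^3))$, both $X(t,0,\cdot)$ and its inverse $X(0,t,\cdot)$ are Lipschitz in space, so $X(t,0,\cdot)$ is a bi-Lipschitz homeomorphism and $\Gamma_t\in\mathcal{C}^{0,1}$.

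For the $\mathcal{C}^{1,\mu}$ case I would use the regularity $u\in L^\infty(0,T;\mathcal{C}^{1,\mu}(\mathbb{R}^3))$ established in the remarks after Theorem \ref{thm}, together with a standard ODE-regularity argument. Differentiating the flow equation in $x$, the Jacobian $J(t,x):=D_xX(t,0,x)$ solves the variational equation $\partial_t J=(Du)(t,X(t,0,x))\,J$ with $J(0,x)=\mathrm{Id}$. A Gronwall estimate based on $\|Du\|_{L^\infty}$ bounds $J$ uniformly on $[0,T]$, and the same argument for the inverse flow bounds $J^{-1}$. To obtain the $\mu$-Hölder continuity of $J$ in $x$, I would estimate the difference $J(t,x)-J(t,y)$ via
\[
\partial_t\big(J(t,x)-J(t,y)\big)=Du(t,X(t,0,x))\big(J(t,x)-J(t,y)\big)+\big[Du(t,X(t,0,x))-Du(t,X(t,0,y))\big]J(t,y);
\]
the first term feeds Gronwall, while the second is bounded by $[Du]_{\mathcal{C}^{0,\mu}}\,|X(t,0,x)-X(t,0,y)|^\mu\le C|x-y|^\mu$ thanks to the Lipschitz bound on the flow and the boundedness of $J$. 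Gronwall then yields $|J(t,x)-J(t,y)|\le C|x-y|^\mu$ uniformly on $[0,T]$, so $X(t,0,\cdot)\in\mathcal{C}^{1,\mu}$, and likewise its inverse; hence $X(t,0,\cdot)$ is a $\mathcal{C}^{1,\mu}$ diffeomorphism and $\Gamma_t\in\mathcal{C}^{1,\mu}$.

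The main obstacle is precisely this last propagation of the Hölder modulus of $J$: everything else follows directly from Theorem \ref{thm} and the stability of graph-regularity under diffeomorphisms. The delicate point is that the Gronwall constant in the difference estimate must be controlled uniformly in $t\in[0,T]$ and independently of $x,y$, which is possible exactly because $u$ is bounded in $L^\infty(0,T;\mathcal{C}^{1,\mu}(\mathbb{R}^3))$ and not merely finite for each fixed time.
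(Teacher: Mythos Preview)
Your argument is correct and is precisely the natural filling-in of the details the paper leaves implicit: the corollary is stated without proof as a direct consequence of Theorem~\ref{thm} (giving $X(\cdot,s,\cdot)\in L^\infty(0,T;W^{1,\infty})$) and of the remark that $u\in L^\infty(0,T;\mathcal{C}^{1,\mu})$. Your Lipschitz case is immediate from the bi-Lipschitz nature of the flow, and your variational-equation/Gronwall argument for the $\mathcal{C}^{1,\mu}$ regularity of $X(t,0,\cdot)$ is the standard way to extract this from $u\in L^\infty_t\mathcal{C}^{1,\mu}_x$; nothing further is needed.
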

\begin{rem}
Corollary \ref{coro_regularite_surface} excludes the formation of a torus, in finite time, if initially $\partial B_0$ is homeomorphic to a sphere.
\end{rem}
\subsubsection{Comparison to the Hadamard and Rybczynski result}

In 1911, Hadamard \cite{Hadamard} and Rybczynski \cite{rybczynski} investigated independently the motion of a liquid spherical drop $B$ falling in a viscous fluid, see also also \cite{Batchelor_book,CGW,Feuillebois}.  
The equations considered are Stokes equations on both fluid and drop domain. Denoting by $\bar{\rho}$, $\bar{\mu}$ (resp.${\rho}$, ${\mu}$) the density and viscosity of the drop (resp. density and viscosity of the fluid), authors show that if $B_0$ is the unit ball then the spherical form of the droplet is preserved and
the velocity fall of the droplet $v^*$ is given by 
\begin{equation}\label{V^*}
v^*= \frac{2}{9} \frac{R^2}{\mu}(\bar{\rho}-\rho) \frac{\mu + \bar{\mu}}{\bar\mu+ \frac{2}{3} {\mu}} g .
\end{equation}
In the case where we drop the coefficient $\frac{R^2}{\mu}(\bar{\rho}-\rho)$ and set $\mu=\bar \mu=1$, we get 
\begin{equation}\label{formule_v^*}
v^*=-\frac{4}{15}e_3.
\end{equation}
We have the following result which shows a similar result for the transport-Stokes system
\begin{prop}\label{conservation_sphere}
The solution $(u,\rho)$ of the transport-Stokes equation \eqref{Vlasov_Stokes} in the case where $\rho_0=1_{B_0}$, $B_0=B(0,1)$ is given by
$$u(t,x)=u_0(x-v^*t),$$
$$\rho(t,x)=\rho_0(x-v^*t), $$
where $ u_0$ solves $-\Delta u +\nabla p = -\rho_0e_3 $, $\div(u_0)=0 $ on $\mathbb{R}^3$ and $v^*$ given by \eqref{formule_v^*}. In particular this shows that $B_t=B_0+v^*t$ for all $t\geq0$.
\end{prop}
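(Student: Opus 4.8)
The plan is to lean on the uniqueness half of Theorem~\ref{thm}: it suffices to exhibit \emph{one} solution of the asserted form, since any solution issued from $\rho_0=1_{B_0}$ must then coincide with it. Accordingly I would set
\[
\tilde\rho(t,x)=\rho_0(x-v^*t),\qquad \tilde u(t,x)=u_0(x-v^*t),\qquad \tilde p(t,x)=p_0(x-v^*t),
\]
where $(u_0,p_0)$ denotes the stationary Stokes flow generated by the source $-\rho_0 e_3$, and verify that this triple solves \eqref{Vlasov_Stokes}. Note that $\tilde\rho\in L^\infty(0,T;L^1\cap L^\infty)$ and, since $u_0\in W^{1,\infty}$, $\tilde u\in L^\infty(0,T;W^{1,\infty})$, so the construction lives in the uniqueness class of Theorem~\ref{thm}.

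The Stokes and incompressibility constraints are immediate, because spatial translation commutes with $\Delta$, $\nabla$ and $\div$: one gets $-\Delta\tilde u+\nabla\tilde p=-\tilde\rho e_3$ and $\div\tilde u=0$ at once. The only genuine content is the transport equation. Using $\div\tilde u=0$,
\[
\partial_t\tilde\rho+\div(\tilde\rho\tilde u)=\big(u_0(x-v^*t)-v^*\big)\cdot\nabla\rho_0(x-v^*t),
\]
so everything reduces to the distributional identity $(u_0-v^*)\cdot\nabla\rho_0=0$. Since $\rho_0=1_{B_0}$ we have $\nabla\rho_0=-n\,d\sigma_{\partial B_0}$ (outward unit normal $n$, surface measure), and as $u_0$ is continuous this is equivalent to the pointwise boundary condition
\[
(u_0-v^*)\cdot n=0\qquad\text{on }\partial B_0,
\]
i.e.\ the normal trace of $u_0$ on the unit sphere must equal that of the constant field $v^*$.

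The heart of the argument, and the main obstacle, is to compute $u_0$ explicitly and read off this normal trace. I would represent $u_0=\mathcal{O}*(-\rho_0 e_3)$ through the Oseen tensor $\mathcal{O}_{ij}(z)=\frac{1}{8\pi}\big(\Delta|z|\,\delta_{ij}-\partial_i\partial_j|z|\big)$, which transfers all derivatives onto a single scalar potential:
\[
u_{0,i}=-\frac{1}{8\pi}\big(\delta_{i3}\,\Delta\Phi-\partial_i\partial_3\Phi\big),\qquad \Phi(x)=\int_{B_0}|x-y|\,dy .
\]
Because $\Phi$ is radial it is explicit: from $\Delta\Phi=2\int_{B_0}|x-y|^{-1}\,dy=4\pi-\tfrac{4\pi}{3}|x|^2$ inside $B_0$ (twice the Newtonian potential of the uniform ball), solving the radial ODE with regularity at the origin gives $\Phi(x)=\tfrac{2\pi}{3}|x|^2-\tfrac{\pi}{15}|x|^4$ up to an additive constant, which is irrelevant since only $\Delta\Phi$ and second derivatives enter. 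Differentiating yields, for $|x|\le 1$,
\[
u_0(x)=-\tfrac13 e_3+\tfrac{2}{15}|x|^2 e_3-\tfrac{1}{15}\,x_3\,x,
\]
and evaluating on $|x|=1$, where $n=x$ and $x_3=e_3\cdot n$, gives
\[
u_0\cdot n=\Big(-\tfrac13+\tfrac{2}{15}-\tfrac{1}{15}\Big)x_3=-\tfrac{4}{15}x_3=v^*\cdot n ,
\]
which is exactly the required boundary condition with $v^*=-\tfrac{4}{15}e_3$.

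With this boundary identity the transport equation holds distributionally, so $(\tilde\rho,\tilde u,\tilde p)$ is a solution of \eqref{Vlasov_Stokes} with datum $1_{B_0}$; uniqueness from Theorem~\ref{thm} identifies it with $(\rho,u)$, and $\supp\tilde\rho(t,\cdot)=B_0+v^*t$ yields $B_t=B_0+v^*t$. I expect all the delicacy to sit in the computation above: fixing the polynomial coefficients of $\Phi$ correctly, and appreciating that $u_0\neq v^*$ in the interior (for instance $u_0(0)=-\tfrac13 e_3$), so the drop translates rigidly while the interior fluid recirculates — only the \emph{normal} trace matches $v^*$, which is precisely what the evolution of a characteristic function detects.
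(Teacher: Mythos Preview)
Your argument is correct and follows the same skeleton as the paper: check that the translated pair $(\rho_0(\cdot-v^*t),u_0(\cdot-v^*t))$ solves \eqref{Vlasov_Stokes}, reduce the transport equation to the boundary identity $(u_0-v^*)\cdot n=0$ on $\partial B_0$, and conclude by uniqueness. The paper's own proof of Proposition~\ref{conservation_sphere} is exactly this, with the boundary identity outsourced to Lemma~\ref{lemme_hadamard}.

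The genuine difference lies in how you establish that boundary identity. The paper proves Lemma~\ref{lemme_hadamard} via the \emph{surface} representation \eqref{formule_int_u}: it writes $u_0(e(\theta,0))$ as an integral over $\partial B(0,1)$, applies the rotation $Q(\theta)$ sending $e_3$ to $e(\theta,0)$, and then evaluates four explicit integrals $\int_{\partial B}\omega/|e_3-\omega|$, $\int_{\partial B}1/|e_3-\omega|$, $\int_{\partial B}\omega_1\omega/|e_3-\omega|$, $\int_{\partial B}\omega_3\omega/|e_3-\omega|$. You instead stay with the \emph{volume} convolution, rewrite the Oseen tensor as $\frac{1}{8\pi}(\Delta|z|\,\delta_{ij}-\partial_i\partial_j|z|)$, and reduce to the biharmonic potential $\Phi(x)=\int_{B_0}|x-y|\,dy$; two radial ODEs then give the closed-form polynomial $u_0=-\tfrac13 e_3+\tfrac{2}{15}|x|^2 e_3-\tfrac{1}{15}x_3 x$ throughout $B_0$, from which the normal trace is read off. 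Your route buys a bit more --- the explicit interior field and the observation that only the normal component matches $v^*$ --- while the paper's surface approach avoids solving for $\Phi$ and is more in line with the boundary-integral machinery used later for the hyperbolic model.
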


The proof relies on a property proven by Hadamard-Rybczynski which states that the normal component of the velocity of the fluid is constant on the surface of the sphere. This constant velocity corresponds to $v^*$ and in particular we present direct computations showing the Hadamard-Rybczynski property, see Lemma \ref{lemme_hadamard}.
\begin{rem}
The propagation of the spherical shape for the transport-Stokes model seems in contradiction with the numerical observations provided in \cite{MNG} but it is worth emphasizing that \eqref{convergence} ensures the similarity of behaviour between the transport-Stokes model and the microscopic one under the assumption that the limit density $\rho$ lies in $L^\infty\cap L^1$ and for a finite time interval $(0,T)$ whereas the departure from the Hadamard and Rybczynski phenomena observed in \cite{MNG} requests a ``very long simulation", see \cite[Section 9]{MNG}. On the other hand, the numerical fluctuations while solving the large system of trajectories may also be investigated in order to justify the fundamental difference of behaviour.
\end{rem}
\subsection{Derivation of a surface evolution model} 
The second part of this paper is devoted to the derivation of a hyperbolic equation describing the surface evolution of a class of initial shapes $B_0$ when $\rho_0=1_{B_0}$. 
 We consider the case of initial axisymetric domains $B_0$ (invariant under rotations around the vertical axis $e_3$) described using a spherical parametrization and a radius function $r_0$ depending only on $\theta \in [0,\pi]$
\begin{equation}\label{def_B_0}
B_0=\left\{r_0(\theta) \begin{pmatrix}
\cos(\phi) \sin(\theta)\\
\sin(\phi) \sin(\theta)\\
\cos(\theta)
\end{pmatrix}, (\theta,\phi)\in[0,\pi]\times[0,2\pi]\right\}.
\end{equation}
The motivation of considering such domains is that the Stokes equation preserves the invariance and ensures that $B_t$ is axisymetric. We set then $c(t)=(0,0,c_3(t))\in B_t$ the position at time $t$ of a reference point such that $c(0)=0$ and write $B_t= c(t)+ \tilde{B}_t$ where
\begin{equation}\label{def_B_t}
\tilde{B}_t=\left\{r(t,\theta) \begin{pmatrix}
\cos(\phi) \sin(\theta)\\
\sin(\phi) \sin(\theta)\\
\cos(\theta)
\end{pmatrix}, (\theta,\phi)\in[0,\pi]\times[0,2\pi]\right\}.
\end{equation}
\begin{rem}
The reference center $c$ is not necessarily the center of mass of the droplet $B_t$. The decomposition $B_t=c(t)+\tilde{B}_t$ with $\tilde{B}_t$ defined in \eqref{def_B_t} is valid as long as $c(t)\in B_t$.
\end{rem}

Using the weak formulation of the transport-Stokes equation we derive a hyperbolic equation for the evolution of the radius $r$.
\begin{equation}\label{equation_r_transport}
\left \{
\begin{array}{rcl}
\partial_t r + \partial_\theta r A_1[r] &= &A_2[r], \\
r(0,\cdot)&=& r_0.
\end{array}
\right.
\end{equation}
The operators $A_1$ and $A_2$ are defined in \eqref{eqA1} and \eqref{eqA2} in Proposition \ref{prop_model}. See also Appendix \ref{appendiceA} for a summary of the formulas. These operators depend non linearly and non locally on the unknown $r$, they also depend on the reference center $c$. We emphasize that there is a coupling between the evolution of the radius $r$ and the motion of the reference center $c$. Precisely, the velocity of $c$ can be seen as a parameter in the model. In particular, if we choose $c$ to be transported along the flow we get $c=c[r]=(0,0,c[r]_3)$ with
\begin{equation}\label{dotc_3}
\left\{
\begin{array}{rcl}
\dot{c}[r]_3(t)&=& -\frac{1}{4}\displaystyle{\int_0^\pi} r^2(t,\bar\theta) \sin(\bar \theta) \left(1-\frac{1}{2} \sin^2(\bar \theta)\right) d \bar \theta ,\\
c[r]_3(0)&=&0,
\end{array}
\right.
\end{equation}
see Proposition \ref{prop_model}.
\subsubsection{Local existence and uniqueness result for the surface evolution equation}
We present a local existence and uniqueness result of $(r,c)$ for Lipschitz functions $r_0$ such that
$$
|r|_*=\underset{(0,\pi)}{\inf}r(\theta)>0.
$$
We emphasize that this existence and uniqueness result cannot be obtained as a direct consequence of the global existence and uniqueness result for the transport-Stokes equation since the validity of the spherical parametrization is not ensured a priori. This motivates the following Theorem 
\begin{thm}\label{thm2}
Let $r_0\in\mathcal{C}^{0,1}[0,\pi]$ such that $|r_0|_*>0$. There exists $T>0$ and a unique $r\in \mathcal{C}(0,T;\mathcal{C}^{0,1}(0,\pi))$ satisfying the hyperbolic equation \eqref{equation_r_transport}. Moreover, there exists a unique associated reference point $c=c[r]\in \mathcal{C}(0,T)$ satisfying \eqref{dotc_3} .
\end{thm}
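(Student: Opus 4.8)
The plan is to read \eqref{equation_r_transport} as a (nonlocal) transport equation and to construct the solution by a Banach fixed-point argument in $X_T:=\mathcal{C}(0,T;\mathcal{C}^{0,1}(0,\pi))$. The first reduction is to decouple the reference center $c$: equation \eqref{dotc_3} gives $\dot c[r]_3$ as an explicit integral of $r^2$, so for every $r\in X_T$ the map $t\mapsto c[r](t)$ is obtained by a single quadrature and depends on $r$ in a Lipschitz way; substituting $c=c[r]$ into the formulas \eqref{eqA1} and \eqref{eqA2} of Proposition \ref{prop_model} turns $A_1[r]$ and $A_2[r]$ into operators of the single unknown $r$. I would then work on the closed subset $\mathcal{B}_T\subset X_T$ of functions $r$ with $r(0,\cdot)=r_0$, $\sup_{t}\|r(t,\cdot)\|_{\mathcal{C}^{0,1}}\le 2\|r_0\|_{\mathcal{C}^{0,1}}$ and $\inf_{t,\theta}r(t,\theta)\ge \tfrac12 |r_0|_*$; the lower bound is exactly what keeps the spherical parametrization, hence the operators themselves, well defined.

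For $\bar r\in\mathcal{B}_T$ fixed, I solve the \emph{linear} transport problem $\partial_t r+A_1[\bar r]\,\partial_\theta r=A_2[\bar r]$, $r(0,\cdot)=r_0$, by the method of characteristics. Defining the angular flow $\dot\Theta(t;\theta_0)=A_1[\bar r](t,\Theta)$, $\Theta(0;\theta_0)=\theta_0$, which is well posed since $A_1[\bar r]$ is Lipschitz in $\theta$, the solution is $r(t,\Theta(t;\theta_0))=r_0(\theta_0)+\int_0^t A_2[\bar r](\tau,\Theta(\tau;\theta_0))\,d\tau$. By axisymmetry $A_1[\bar r]$ vanishes at the poles $\theta=0,\pi$, so the characteristics do not leave $(0,\pi)$ and the map is globally defined. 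Boundedness of $A_2[\bar r]$ in $L^\infty$ gives $|r-r_0|\le T\,\|A_2[\bar r]\|_\infty$, which preserves the lower bound for $T$ small, while Gr\"onwall estimates on $\partial_\theta r$, using bounds on $A_1[\bar r]$, $\partial_\theta A_1[\bar r]$ and $\partial_\theta A_2[\bar r]$, preserve the Lipschitz bound. Hence the solution map $\mathcal{T}:\bar r\mapsto r$ sends $\mathcal{B}_T$ into itself for $T$ small enough.

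The contraction estimate is obtained by subtracting the equations for $\mathcal{T}(\bar r_1)$ and $\mathcal{T}(\bar r_2)$: their difference solves a transport equation whose velocity is $A_1[\bar r_1]$ and whose source collects $A_2[\bar r_1]-A_2[\bar r_2]$ together with $(A_1[\bar r_1]-A_1[\bar r_2])\,\partial_\theta \mathcal{T}(\bar r_2)$. Controlling this in $\mathcal{C}(0,T;\mathcal{C}(0,\pi))$ requires the \emph{Lipschitz continuity} of the operators, namely $\|A_i[\bar r_1]-A_i[\bar r_2]\|\lesssim \|\bar r_1-\bar r_2\|$ on $\mathcal{B}_T$; the extra factor $T$ coming from the time integration then makes $\mathcal{T}$ a contraction for $T$ small, and its unique fixed point is the desired solution. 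Uniqueness in $X_T$ follows from the same difference estimate applied to two genuine solutions, and $c=c[r]$ is recovered from \eqref{dotc_3}.

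The main obstacle is precisely the operator analysis: proving that $A_1,A_2$ map $\{\,r\in\mathcal{C}^{0,1}(0,\pi):\ |r|_*>0\,\}$ boundedly into $\mathcal{C}^{0,1}(0,\pi)$ and are Lipschitz in $r$. These operators are traces on $\partial\tilde B_t$ of the Stokes velocity $u(x)=-\int_{\tilde B_t}\Phi(x-y)e_3\,dy$ built from the Oseen tensor $\Phi$, rewritten in the radial parametrization; the integrand is singular and the integration domain itself depends on $r$, so the estimates demand a careful near-field/far-field splitting of the singular integral and control of how the domain varies with $r$. The favorable structural fact, already reflected in the remark that $u\in\mathcal{C}^{1,\mu}$ for bounded densities, is that convolution against the Stokes Green function \emph{gains} one derivative; this smoothing compensates the $\partial_\theta r$ appearing in the transport term, so that differentiating $A_1[r]$ in $\theta$ does not cost more regularity than $r$ already has. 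It is this absence of net derivative loss that lets the fixed point close in the Lipschitz class $\mathcal{C}^{0,1}$ rather than forcing a scale of spaces, and establishing it rigorously for the singular radial kernels is the technical heart of the proof.
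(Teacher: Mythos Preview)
Your proposal is correct and follows essentially the same route as the paper: a fixed-point argument built on the characteristic flow $\Theta$, with $A_1[\bar r]$ vanishing at the poles so that $(0,\pi)$ is invariant, self-mapping of a ball defined by an upper $\mathcal{C}^{0,1}$ bound and a lower bound $|r|_*>0$, and contraction measured in the weaker $\mathcal{C}(0,T;\mathcal{C}(0,\pi))$ norm via the Lipschitz dependence of $A_1,A_2$ on $r$. The paper carries out exactly the operator analysis you flag as the technical heart, using the volume-integral representation $\mathcal{U}[r](\theta)=\int\!\!\int\!\!\int_0^{r(\bar\theta)}\Phi(r(\theta)e-z\bar e)\,z^2\sin\bar\theta\,dz\,d\bar\theta\,d\bar\phi$ (rather than the surface formula) to extract the bounds on $\mathcal{U}[r]$, $\partial_\theta\mathcal{U}[r]$ and $\mathcal{U}[r_1]-\mathcal{U}[r_2]$ by direct estimation of the $z$-integral and a reduction to integrable spherical kernels; this is precisely the ``smoothing of the Stokes Green function'' you invoke, made explicit.
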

\begin{rem}
The same result holds true if the motion of the center $c$ is defined in another way. The only properties needed is a uniform bound on $\dot{c}$ and a stability estimate with respect to $r$ if $c=c[r]$, see \eqref{estimate_c}. 
\end{rem}

\subsubsection{Investigation of the spherical case and numerical simulations}
In Section \ref{section_hyperbolic_sphere} we investigate the spherical case starting from the hyperbolic equation. We emphasize that this cannot be obtained directly from Proposition \ref{coro_regularite_surface} since the hyperbolic model depends on the choice of the reference center $c$. More precisely we distinguish two cases: if ${c}={c}^*$, then a straightforward computation shows that $A_2[1]=0$ in \eqref{equation_r_transport} and hence $r=1$ is solution of the hyperbolic equation and we recover the Hadamard-Rybczynski result. On the other hand, if $\dot{c}\neq\dot{c}^*$, we show that the solution $r$ corresponds to a spherical parametrization of the Hadamard-Rybczynski sphere $B(c^*,1)$ as long as the reference center $c$ belongs to $B(c^*,1)$, see Proposition \ref{prop_alkashi}. Moreover, in the case where $\dot{c}$ is given by \eqref{dotc_3}, explicit computations show that $|c(t)-c^*(t)|\leq 1$ for all $t\geq 0$ and $\underset{t \to \infty}{\lim} |c(t)-c^*(t)|=1$, see Proposition \ref{prop_cc*}. This ensures that $c(t) \in B(c^*,1)$ for all time and shows global existence of the solution of equations \eqref{equation_r_transport}, \eqref{dotc_3}. This result suggests that the maximal time of existence of the solution $r$ may depend on the choice of the velocity of the reference center $c$. 

We finish the paper by proposing a numerical scheme in order to illustrate the spherical case $r_0=1$. First, we present numerical simulations for the solution of \eqref{equation_r_transport} with $\dot{c}$ fixed as in \eqref{dotc_3} and recover numerically Hadamard-Rybczynski solution. Second we investigate a test case for which $\dot{c}\neq \dot{c}^* $ and is such that the center $c(t)$ leaves the sphere $B(c^*,1)$ after $t=0.5$. Numerical computations show the validity of Proposition \eqref{prop_alkashi} until $t=0.5$ and we observe negative values of the radius $r$ after $t=0.5$. The last test case illustrates the steady state \textit{i.e.} $c=c^*$ for which $r=1$ is solution for all time. We finish by a discussion on the approximation scheme and possible future numerical investigations.
 
\section{On the transport-Stokes model}
In order to prove Theorem \ref{thm} we recall first some existence, uniqueness and stability estimates for Stokes and transport equations.
\subsection{Reminder on the Steady Stokes and transport equations}
Equation \eqref{Vlasov_Stokes} is a steady Stokes problem coupled with a transport equation. We recall here some properties concerning the Stokes problem on $\mathbb{R}^3$ and the transport equations.
\begin{prop}\label{prop1}
Let $\eta \in L^\infty(\mathbb{R}^3) \cap L^1(\mathbb{R}^3)$, The unique velocity field $u$ solution to the Stokes equation: 
\begin{equation*}
\left\{
\begin{array}{rcll}
-\Delta u + \nabla p &=& \eta \,,&\text{ on $\mathbb{R}^3$}\\
\div (u) &=& 0\,,&\text{ on $\mathbb{R}^3$,}
\end{array}
\right.
\end{equation*}
is given by the convolution of the source term $\eta$ with the Oseen tensor $\Phi$
\begin{equation}\label{c1:Oseen}
\Phi(x)= \frac{1}{8 \pi} \left(\frac{\mathbb{I}_3}{|x|}+ \frac{x \otimes x}{|x|^3} \right).
\end{equation}
Moreover, $u\in W^{1,\infty}(\mathbb{R}^3)$ and there exists a positive constant independent of the data such that:
\begin{equation}\label{stab_prop1}
\|u \|_\infty + \|\nabla u \|_\infty \leq C \|\eta\|_{L^1 \cap L^\infty}.
\end{equation}
\end{prop}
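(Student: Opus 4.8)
The plan is to prove the proposition in two stages: the representation formula together with uniqueness, which I treat as classical, and then the quantitative bound \eqref{stab_prop1}, which carries the actual computational content.

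For the representation, the key fact is that the Oseen tensor $\Phi$ defined in \eqref{c1:Oseen} is the fundamental solution of the Stokes system: there is an associated pressure kernel $P(x)=\frac{1}{4\pi}\frac{x}{|x|^3}$ for which $-\Delta\Phi+\nabla P=\delta_0\,\mathbb{I}_3$ and $\div\Phi=0$ in the distributional sense. Granting this---it follows by differentiating the explicit expression away from the origin and accounting for the singular contribution at $0$, or may simply be invoked from \cite{Galdi}---the fields $u=\Phi*\eta$ and $p=P*\eta$ solve the Stokes problem. The convolution is well defined because $\Phi\in L^1_{\loc}(\mathbb{R}^3)$ while $\eta\in L^1\cap L^\infty$. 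For uniqueness within the class of fields vanishing at infinity, I would use a Liouville-type argument: the difference $w$ of two solutions solves the homogeneous Stokes system, so its pressure is harmonic and decaying hence constant, whereupon $w$ is harmonic, bounded and decaying, thus $w=0$; this too is standard and can be cited from \cite{Galdi}.

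The heart of the proof is the estimate \eqref{stab_prop1}, which rests on the two pointwise bounds $|\Phi(z)|\le C|z|^{-1}$ and $|\nabla\Phi(z)|\le C|z|^{-2}$, both immediate from the explicit formula. The strategy is to split the convolution by the distance $|x-y|$. Writing
\[
u(x)=\int_{|x-y|\le1}\Phi(x-y)\eta(y)\,dy+\int_{|x-y|>1}\Phi(x-y)\eta(y)\,dy,
\]
on the near region the kernel is dominated by $C|x-y|^{-1}$, which is integrable over the unit ball in dimension three, so this piece is at most $C\|\eta\|_\infty\int_{|z|\le1}|z|^{-1}\,dz=C\|\eta\|_\infty$; on the far region the kernel is bounded by a constant, so the piece is controlled by $C\int_{|x-y|>1}|\eta(y)|\,dy\le C\|\eta\|_1$. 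Taking the supremum in $x$ gives $\|u\|_\infty\le C\|\eta\|_{L^1\cap L^\infty}$.

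The gradient is treated identically after writing $\nabla u=(\nabla\Phi)*\eta$: near the origin the singularity $|z|^{-2}$ remains integrable in three dimensions, yielding the $\|\eta\|_\infty$ contribution, while on the complement $\nabla\Phi$ is uniformly bounded, yielding the $\|\eta\|_1$ contribution. I stress that one never needs integrability of $\nabla\Phi$ at infinity, only its uniform boundedness there, so that $\eta\in L^1$ suffices. Summing the two bounds yields \eqref{stab_prop1}. The only point requiring care, and the mildest obstacle, is the rigorous justification that differentiation passes through the convolution so that $\nabla u=(\nabla\Phi)*\eta$ holds pointwise; this is secured by the local integrability of $\nabla\Phi$ together with the boundedness of $\eta$, via a standard mollification or dominated-convergence argument.
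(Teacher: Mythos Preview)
Your argument is correct. The paper itself does not write out a proof of this proposition: it simply points to \cite[Lemma~3.18]{Hofer} for the case $\eta\in X_\beta$ and remarks that the proof is ``mainly the same'' for $\eta\in L^1\cap L^\infty$. Your near/far splitting of the convolution, using the pointwise bounds $|\Phi(z)|\le C|z|^{-1}$ and $|\nabla\Phi(z)|\le C|z|^{-2}$ together with the local integrability of $|z|^{-1}$ and $|z|^{-2}$ in $\mathbb{R}^3$, is precisely the standard mechanism behind such estimates and is in all likelihood the content of the cited lemma. So you have supplied more detail than the paper, not less, and along the expected lines.
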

A proof can be found in \cite[Lemma 3.18]{Hofer} in the case $\eta \in X_\beta$ where $X_\beta$ is defined in \cite[Definition 2.5]{Hofer}. The proof is mainly the same when considering $\eta \in L^1 \cap L^\infty$. We recall now a stability estimate using the first Wasserstein distance $W_1$ which is well defined for measures with finite first moment. The following Proposition uses arguments similar to \cite[Proposition 3]{HJ} and \cite[Theorem 3.1]{Hauray}.  We refer the reader to \cite{Santambrogio, Villani} for more details on the Wasserstein metrics.
\begin{prop}[Steady-Stokes stability estimates]\label{stab_Stokes}
Let $\eta_1$, $\eta_2 \in L^1(\mathbb{R}^3) \cap L^\infty(\mathbb{R}^3)$ and denote by $u_1$ and $u_2$ the associated Stokes solution. For all compact subset $K \subset \mathbb{R}^3$ one can show that there exists a constant depending on $K$ such that
$$\|u_1 -u_2\|_{L^1(K)}+\| \nabla u_1 -\nabla u_2\|_{L^1(K)} \leq  C(K)\,  W_1(\eta_1,\eta_2).$$
Moreover, given a density $\rho \in L^1 \cap L^\infty$, there exists a positive constant independent of the data such that: 
\begin{equation}\label{formule2}
\int_{\mathbb{R}^3} |u_1(x)-u_2(x)| \rho(dx) \leq C \|\rho\|_{L^1 \cap L^\infty} W_1(\eta_1,\eta_2)
\end{equation}
\end{prop}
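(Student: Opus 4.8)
The plan is to use the explicit representation $u_i=\Phi*\eta_i$ from Proposition \ref{prop1}, so that $u_1-u_2=\Phi*(\eta_1-\eta_2)$ and $\nabla u_1-\nabla u_2=(\nabla\Phi)*(\eta_1-\eta_2)$, and to exploit the Kantorovich formulation of $W_1$. Assuming (as is implicit for $W_1$ to make sense) that $\eta_1,\eta_2$ share the same total mass, I would fix an optimal transport plan $\pi$ between them, so that $\pi$ has marginals $\eta_1,\eta_2$ and $\int|y-z|\,d\pi(y,z)=W_1(\eta_1,\eta_2)$. Then for every $x$,
$$u_1(x)-u_2(x)=\int\big(\Phi(x-y)-\Phi(x-z)\big)\,d\pi(y,z),$$
and similarly with $\nabla\Phi$. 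The whole proof reduces to pointwise-in-$(y,z)$ estimates of the increment kernels $\int_K|\Phi(x-y)-\Phi(x-z)|\,dx$ and $\int|\Phi(x-y)-\Phi(x-z)|\,\rho(dx)$, after which Fubini and $\int|y-z|\,d\pi=W_1$ close the argument.

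For the two velocity estimates I would write, by the fundamental theorem of calculus along the segment $[z,y]$,
$$\Phi(x-y)-\Phi(x-z)=-\int_0^1\nabla\Phi\big(x-z-s(y-z)\big)\cdot(y-z)\,ds,$$
whence $|\Phi(x-y)-\Phi(x-z)|\le|y-z|\int_0^1|\nabla\Phi(x-w_s)|\,ds$ with $w_s=z+s(y-z)$. The key point is that $\nabla\Phi(w)\sim|w|^{-2}$ is locally integrable in $\mathbb R^3$ and decays, so $\sup_{a\in\mathbb R^3}\int_K|\nabla\Phi(x-a)|\,dx\le C(K)$: one splits the ball containing $K$ into the region near the singularity, where $\int_{|w|<R}|w|^{-2}dw=4\pi R$, and the far region, where $|\nabla\Phi|$ is bounded. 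Integrating in $x$ over $K$, applying Fubini, and integrating against $\pi$ yields the $L^1(K)$ bound for $u_1-u_2$. For the weighted estimate \eqref{formule2} the same computation reduces matters to $\sup_a\int|w-a|^{-2}\rho(dw)\le C\|\rho\|_{L^1\cap L^\infty}$, which follows from the standard splitting at radius $R_0=(\|\rho\|_{L^1}/\|\rho\|_\infty)^{1/3}$, giving the bound $C\|\rho\|_\infty^{2/3}\|\rho\|_{L^1}^{1/3}$.

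The delicate part is the gradient term $\|\nabla u_1-\nabla u_2\|_{L^1(K)}$, which by the same scheme reduces to $\int_K|\nabla\Phi(x-y)-\nabla\Phi(x-z)|\,dx\le C(K)|y-z|$. Here the mean-value argument produces the second derivative $\nabla^2\Phi(w)\sim|w|^{-3}$, which sits exactly at the borderline of local integrability in dimension three and therefore cannot be integrated naively near the singularity. I would handle this by a near/far splitting around the midpoint $m=(y+z)/2$ at scale $\delta=|y-z|$: on the near zone $\{|x-m|\le\delta\}$ one bounds the two terms separately using the local integrability of $\nabla\Phi$, contributing $O(\delta)$, while on the far zone one applies the mean-value bound involving $\nabla^2\Phi$. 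The main obstacle is precisely this far-zone integral $\int_{\delta\le|x-m|\le 2R}|x-m|^{-3}\,dx\sim\ln(R/\delta)$, which makes the honest increment bound of logarithmic type $|y-z|\,(1+\ln_+(R/|y-z|))$ rather than purely linear; recovering the stated clean $C(K)\,W_1$ — for instance through concavity and Jensen's inequality after integrating against $\pi$, or through a finer use of the cancellation structure of $\nabla^2\Phi$ — is the technical crux of this estimate. Once the increment kernel is controlled, Fubini and the optimality of $\pi$ conclude as before.
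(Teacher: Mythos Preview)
Your treatment of the two velocity estimates matches the paper's proof. The paper uses an optimal transport \emph{map} $T$ with $\eta_2=T\#\eta_1$ (rather than a plan $\pi$), invokes the endpoint increment bound
\[
|\Phi(x-y)-\Phi(x-T(y))|\le C\,|T(y)-y|\Big(\frac{1}{|x-y|^{2}}+\frac{1}{|x-T(y)|^{2}}\Big),
\]
and applies Fubini to integrate the $|w|^{-2}$ kernel over $K$ (respectively against $\rho$). Your mean-value computation along the segment $[z,y]$ produces the same $|\nabla\Phi|\sim|w|^{-2}$ kernel and is equivalent.

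On the gradient term the paper's written proof is silent: it treats only $\|u_1-u_2\|_{L^1(K)}$ and \eqref{formule2} and never addresses $\nabla\Phi$. Your analysis is correct that the identical scheme applied to $\nabla\Phi$ hits the borderline $|w|^{-3}$ and yields only
\[
\int_K|\nabla\Phi(x-y)-\nabla\Phi(x-z)|\,dx\ \lesssim\ |y-z|\Big(1+\ln_+\tfrac{R}{|y-z|}\Big);
\]
for $y=0$, $z=\delta e_1$, $K=B(0,1)$ the far-field contribution over $\{2\delta\le|x|\le 1\}$ is genuinely of order $\delta\ln(1/\delta)$, since $\nabla^2\Phi$ is a nontrivial Calder\'on--Zygmund kernel of homogeneity $-3$. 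So the ``technical crux'' you flag is not a defect in your argument but a point the paper leaves unproved: the transport-map route taken there does not deliver the gradient half of the inequality without the logarithm either. Note, however, that for the only place this estimate is used in the paper---showing that $(\nabla u^n)$ is Cauchy in $L^\infty(0,T;L^1(K))$---your log-Lipschitz bound already suffices, since $W_1(\rho^n,\rho^m)\to 0$ still forces the left-hand side to vanish.
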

Since similar computations will be used thereafter, we present the proof of the former Proposition.
\begin{proof}
According to \cite[Theorem 1.5]{Santambrogio}, there exists an optimal transport map $T$ such that $\eta_2:= T \# \eta_1$ and we have:
$$
W_1(\eta_1,\eta_2) = \int_{\mathbb{R}^3}|T(y)-y|\, \eta_1(dy).
$$
This yields:
\begin{align*}
\int_K \left |u_2(x)-u_1(x)\right| dx &= \int_K \left| \int_{\mathbb{R}^3} \Phi(x-y) \eta_1(dy)-\int_{\mathbb{R}^3} \Phi(x-T(y)) \eta_1(dy) \right| dx \\
&\leq C \int_K \int _{\mathbb{R}^3} \frac{|T(y)-y|}{\min(|x-y|^2,|x-T(y)|^2)}\eta_1(dy) dx \\
& \leq \int_{\mathbb{R}^3} \int_K  \left ( \frac{1}{|x-y|^2}+\frac{1}{|x-T(y)|^2}\right)dx  |T(y)-y|\, \eta_1(dy)\\
& \leq C(K)  W_1(\eta_1,\eta_2).
\end{align*}
The proof of the last formula \eqref{formule2} is analogous to the estimate above where we replace $C(K)$ by $C(\|\rho\|_{L^1 \cap L^\infty})$.
\end{proof}

Given a velocity field having the same regularity as above, we recall now an existence, uniqueness and stability estimate for the transport equations using the first Wasserstein distance.
\begin{prop}\label{stab_transport}
Let $u \in L^\infty(0,T; W^{1,\infty}(\mathbb{R}^3))$ and $\rho_0 \in L^1 \cap L^\infty$, for all $T>0$ there exists a unique solution $\eta\in L^\infty(0,T;L^1 \cap L^\infty)$ to the transport equation
\begin{equation}
\left\{
\begin{array}{rcl}
\partial_t \rho + \div(\rho u )& =& 0 \,, \\
\rho(0,\cdot)&=& \rho_0\,.
\end{array}
\right.
\end{equation}
Moreover, given two velocity fields $u_i$, $i=1,2$, if we denote by $\rho_i$ the solution to the associated transport equation , we have for all $t\geq s \geq 0$:
\begin{multline}\label{formule_stab}
W_1(\rho_1(t),\rho_2(t))\\ \leq \left ( W_1(\rho_1(s),\rho_2(s))+ \int_s^t\int_{\mathbb{R}^3} \left |u_2(\tau,x)-u_1(\tau, x)\right| \rho_1(\tau,x) dx d\tau \right) e^{Q_2(t-s)} ,
\end{multline}
where $Q_i:= \| u_i\|_{L^\infty(0,T; W^{1,\infty})}$.
\end{prop}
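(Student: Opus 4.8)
The plan is to construct the solution by the method of characteristics and then obtain the stability estimate through an optimal-coupling Gronwall argument. First I would observe that since $u\in L^\infty(0,T;W^{1,\infty}(\mathbb{R}^3))$ the field is bounded and Lipschitz in space uniformly in time, so the characteristic ODE $\partial_t X(t,s,x)=u(t,X(t,s,x))$, $X(s,s,x)=x$, admits by the Cauchy--Lipschitz (Carath\'eodory) theorem a unique absolutely continuous flow, and $X(t,s,\cdot)$ is a bi-Lipschitz homeomorphism enjoying the semigroup property $X(t,\sigma,\cdot)\circ X(\sigma,s,\cdot)=X(t,s,\cdot)$. I would then set $\rho(t,\cdot):=X(t,0,\cdot)\#\rho_0$ and check directly that it is a weak solution: for a test function $\varphi$, differentiating $\int\varphi(X(t,0,x))\,\rho_0(dx)$ in time reproduces $\tfrac{d}{dt}\int\varphi\,\rho(t)=\int\nabla\varphi\cdot u\,\rho(t)$, the weak form of $\partial_t\rho+\div(\rho u)=0$. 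The $L^1$ and $L^\infty$ bounds follow from a change of variables: the Jacobian of $X(t,0,\cdot)$ equals $\exp\bigl(\int_0^t\div u(\tau,X(\tau,0,\cdot))\,d\tau\bigr)$, which is bounded above and below since $\div u\in L^\infty$ (and equals $1$ in the divergence-free case), giving $\|\rho(t)\|_{L^p}\leq e^{Ct}\|\rho_0\|_{L^p}$ for every $p$; uniqueness in this class is classical for Lipschitz fields.

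For the stability estimate I would fix $s$ and let $X_1,X_2$ denote the flows associated with $u_1,u_2$. Following the idea used in the proof of Proposition \ref{stab_Stokes}, I choose via \cite[Theorem 1.5]{Santambrogio} an optimal transport plan $\pi_s$ of $\rho_1(s)$ and $\rho_2(s)$, i.e.\ a measure on $\mathbb{R}^3\times\mathbb{R}^3$ with the prescribed marginals and $W_1(\rho_1(s),\rho_2(s))=\int|x-y|\,\pi_s(dx,dy)$. The key step is to transport this coupling simultaneously along both flows: set $\pi_t:=\bigl(X_1(t,s,\cdot),X_2(t,s,\cdot)\bigr)\#\pi_s$. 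Using $\rho_i(t)=X_i(t,s,\cdot)\#\rho_i(s)$ (which comes from the semigroup property and the pushforward representation), $\pi_t$ is an admissible coupling of $\rho_1(t)$ and $\rho_2(t)$, whence $W_1(\rho_1(t),\rho_2(t))\leq D(t):=\int|X_1(t,s,x)-X_2(t,s,y)|\,\pi_s(dx,dy)$.

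The core computation is then to differentiate $D$ in $t$. From the ODEs, $\tfrac{d}{dt}|X_1-X_2|\leq|u_1(t,X_1)-u_2(t,X_2)|$, and I would split this as $|u_1(t,X_1)-u_2(t,X_1)|+|u_2(t,X_1)-u_2(t,X_2)|$. The second term is bounded by $\|\nabla u_2\|_\infty|X_1-X_2|\leq Q_2|X_1-X_2|$. For the first term the decisive point is that the first marginal of $\pi_s$ pushed forward by $X_1(t,s,\cdot)$ is exactly $\rho_1(t)$, so that $\int|u_1(t,X_1)-u_2(t,X_1)|\,\pi_s(dx,dy)=\int_{\mathbb{R}^3}|u_1(t,z)-u_2(t,z)|\,\rho_1(t,z)\,dz$. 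Integrating the differential inequality against $\pi_s$ gives $D'(t)\leq\int_{\mathbb{R}^3}|u_2-u_1|\,\rho_1(t)\,dz+Q_2 D(t)$, and Gronwall's lemma on $[s,t]$ yields $D(t)\leq\bigl(D(s)+\int_s^t\int_{\mathbb{R}^3}|u_2-u_1|\,\rho_1\,dx\,d\tau\bigr)e^{Q_2(t-s)}$, which is the claimed bound since $D(s)=W_1(\rho_1(s),\rho_2(s))$ and $W_1(\rho_1(t),\rho_2(t))\leq D(t)$.

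The main obstacle is the rigorous justification of this formal differentiation. I must verify that $t\mapsto D(t)$ is absolutely continuous and that differentiation commutes with the $\pi_s$-integral, which I would obtain from the uniform Lipschitz and boundedness estimates on the flows together with dominated convergence, and I must justify the marginal identity for the first term, which is a change-of-variables argument resting on the pushforward representation of $\rho_1(t)$. Once these measure-theoretic points are secured, the Gronwall step is routine and the estimate follows.
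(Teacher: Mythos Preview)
Your argument is correct and matches the paper's approach: existence and uniqueness via Carath\'eodory characteristics and the pushforward representation, and stability by transporting an optimal coupling at time $s$ forward along the two flows and applying Gronwall. The only cosmetic difference is that the paper works with an optimal transport \emph{map} $T_s$ (available since $\rho_1(s)\in L^1$), builds $T_t:=X_2(t,s,\cdot)\circ T_s\circ X_1(s,t,\cdot)$, and splits off the $\operatorname{Lip}(X_2(t,s,\cdot))$-contribution before applying Gronwall to the remaining term $\int|X_2(t,s,y)-X_1(t,s,y)|\,\rho_1(s,dy)$, whereas you use a plan $\pi_s$ and differentiate $D(t)$ directly; both routes give the same final bound.
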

\begin{proof}
Classical transport theory ensures the existence and uniqueness. Precisely, the characteristic flow satisfying
\begin{equation}\label{flow_X}
\left \{
\begin{array}{rcll}
\partial_t X(t,s,x) & = & u(s, X(t,s,x)),&\: \: \forall \, t,s\in[0,T] , \\
X(s,s,x) & = & x,& \: \: \forall \, s \in [0,T] ,
\end{array}
\right.
\end{equation}
is well defined in the sense of Carath\'eodory since $u$ is $L^\infty$ in time and Lipschitz regarding the space variable. Moreover, the following formula hods true
\begin{equation}\label{formule1}
\rho(t,\cdot)=X(t,s,\cdot)\# \rho(s,\cdot).
\end{equation}
Now, consider two velocity fields $u_i \in L^\infty(0,T;W^{1,\infty})$ and denote by $X_i$ its associated characteristic flow.
For all $x \neq y $, $i=1,2$ we have:
\begin{align*}
|X_i(t,s,x)-X_i(t,s,y)|& \leq |x-y| + \int_s^t |u_i(\tau,X_i(\tau,s,x))-u_i(\tau,X_i(\tau,s,y))| d \tau \\
& \leq |x-y| + Q_i \int_s^t |X_i(\tau,s,x)-X_i(\tau,s,y)|d\tau\,,
\end{align*}
which yields, using Gronwall's inequality, for all $t \geq s \geq 0$:
$$\Lip(X_i(s,t,\cdot)) \leq e^{Q_i(t-s)}.$$
We recall that at time $s\geq 0$, according to \cite[Theorem 1.5]{Santambrogio}, one can choose an optimal mapping $T_s$ such that $\rho_2(s)= T_s \# \rho_1(s)$ and
$$
W_1(\rho_1(s),\rho_2(s)) := \int |T_s(y) - y| \rho_1(s,dy),
$$
on the other hand, thanks to the flows $X_i$ we can construct a mapping $T_t$ at time $t\geq s$ such that $\rho_2(t)= T_t \# \rho_1(t)$ defined by 
\begin{equation}\label{formule1bis}
T_t:= X_2(t,s,\cdot) \circ T_s \circ X_1(s,t,\cdot). 
\end{equation}
According to the definition of the Wasserstein distance and formulas \eqref{formule1}, \eqref{formule1bis} we have:
\begin{align*}
W_1(\rho_1(t),\rho_2(t))& \leq   \int \left |T_t(x)- x)\right| \rho_1(t,dx) \\
& = \int \left |T_t(X_1(t,s,y))- X_1(t,s,y)\right| \rho_1(s,dy)\\
& = \int \left |X_2(t,s,T_s(y))- X_1(t,s,y)\right| \rho_1(s,dy)\\
& \leq \Lip(X_2(t,s,\cdot))  W_1(\rho_1(s),\rho_2(s))+\int \left |X_2(t,s,y)- X_1(t,s,y)\right| \rho_1(s,dy).
\end{align*}
Now we have:
\begin{align*}
& \int  \left |X_2(t,s,y)- X_1(t,s,y)\right| \rho_1(s,dy)\\
& \leq \int_s^t \int \left |u_2(\tau,X_2(\tau,s,y))-u_1(\tau, X_1(\tau,s,y)\right| \rho_1(s,dy)  d\tau\\
&\leq Q_2 \int_s^t \int  \left |X_2(\tau,s,y)- X_1(\tau,s,y))\right| \rho_1(s,dy)  d\tau + \int_s^t\int \left |u_2(\tau,x)-u_1(\tau, x)\right| \rho_1(\tau,dx) d\tau.
\end{align*}
Gronwall's inequality yields: 
\begin{equation*}
\int_{\mathbb{R}^3}  \left |X_2(t,s,y)- X_1(t,s,y)\right| \rho_1(s,dy)  \leq \left( \int_s^t\int_{\mathbb{R}^3} \left |u_2(\tau,x)-u_1(\tau ,x)\right| \rho_1(\tau,dx) d\tau \right) e^{Q_2(t-s)}.
\end{equation*}
Finally we get
\begin{align*}
W_1(\rho_1(t),\rho_2(t))& \leq \Lip(X_2(t,s,\cdot))  W_1(\rho_1(s),\rho_2(s))\\
&+\left( \int_s^t\int_{\mathbb{R}^3} \left |u_2(\tau,x)-u_1(\tau,x)\right| \rho_1(\tau,dx) d\tau \right) e^{Q_2(t-s)},
\end{align*}
with $\Lip(X_2(s,t,\cdot)) \leq e^{Q_2(t-s)}$.
\end{proof}
\subsection{proof of the existence and uniqueness result}
One can use a fixed-point argument in order to show existence and uniqueness however it is also possible to take advantage of the existence and uniqueness result provided by H\"ofer in \cite{Hofer} for initial regular data. We recall first the definition of the space $X_\beta$ for $\beta\geq 0$ 
$$X_\beta= \{h\in L^\infty(\mathbb{R}^3), \|h\|_{X^\beta}= \underset{x}{\sup} (1+|x|^\beta)|h(x)| < +\infty \}.$$
\begin{thm}[R.~M. H\"ofer 2018]\label{thm_Hofer}
Assume $\rho_0 \in X_\beta $ with $\nabla \rho_0 \in X_\beta $ for some $\beta >2$. Then equation \eqref{Vlasov_Stokes} admits a unique solution $(\rho,u) \in W^{1,\infty}(0,T; X_\beta)\times L^\infty(0,T; W^{1,\infty}(\mathbb{R}^3)) $ for all $T>0$ and $\nabla \rho \in L^\infty(0,T; X_\beta)$.
\end{thm}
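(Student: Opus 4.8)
The plan is to treat \eqref{Vlasov_Stokes} as a fixed-point problem for the velocity field, alternating a Stokes solve with a transport solve. Given a velocity $u \in L^\infty(0,T;W^{1,\infty}(\mathbb{R}^3))$, Proposition \ref{stab_transport} produces $\rho = X(\cdot,0,\cdot)\#\rho_0$ solving the transport equation, and Proposition \ref{prop1} (in its weighted form, see below) produces a new velocity $\tilde u = \Phi * (-\rho e_3)$ solving the Stokes system. I would show that the map $\Gamma : u \mapsto \tilde u$ is a contraction on a ball of $L^\infty(0,T;W^{1,\infty})$ for $T$ small, then bootstrap to global time. The whole argument rests on two weighted a priori estimates: a mapping property of the Stokes operator on $X_\beta$, and the propagation of the weighted regularity of $\rho_0$ by the transport flow.

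First I would establish the Stokes estimate adapted to the weight. Writing $u = \Phi * (-\rho e_3)$ with $\Phi(x)\sim |x|^{-1}$ and $\nabla\Phi(x)\sim|x|^{-2}$, the key point is that for $\rho \in X_\beta$ the integrand in $\int \Phi(x-y)\rho(y)\,dy$ behaves like $|y|^{-\beta-1}$ at infinity, which is integrable in $\mathbb{R}^3$ precisely because $\beta > 2$; this is exactly where the hypothesis $\beta>2$ enters. Splitting the convolution into the near-field $|x-y|\le 1$ (where the local singularity $|x-y|^{-1}$ is integrable) and the far-field (controlled by the decay of $\rho$) yields $\|u\|_\infty + \|\nabla u\|_\infty \lesssim \|\rho\|_{X_\beta}$, together with a decay bound on $u$ and $\nabla u$ that I will need to close the transport step.

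Next I would propagate the weighted regularity along the flow. Since $\div u = 0$, the flow is measure preserving and $\rho(t,x) = \rho_0(X(0,t,x))$, while the Lipschitz bound $\Lip(X(0,t,\cdot)) \le e^{Qt}$ from Proposition \ref{stab_transport} holds with $Q := \|u\|_{L^\infty(0,T;W^{1,\infty})}$. Because $u$ is bounded, characteristics move at most linearly, $|X(0,t,x) - x| \le T\|u\|_{L^\infty}$, so the weight satisfies $1 + |x|^\beta \le C(T)(1 + |X(0,t,x)|^\beta)$ on $[0,T]$; this comparison transfers $\|\rho_0\|_{X_\beta}$ into a bound on $\|\rho(t)\|_{X_\beta}$. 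Differentiating the flow representation gives $\nabla\rho(t,x) = \nabla X(0,t,x)^{T}\,\nabla\rho_0(X(0,t,x))$, and combining $\|\nabla X(0,t,\cdot)\|_\infty \le e^{Qt}$ with the same weight comparison shows $\nabla\rho(t) \in X_\beta$ with controlled norm. Together with the first step this shows $\Gamma$ maps a suitable ball of $L^\infty(0,T;W^{1,\infty})$ into itself. For the contraction I would estimate $\tilde u_1 - \tilde u_2 = \Phi*((\rho_2-\rho_1)e_3)$ by the weighted Stokes estimate applied to $\rho_1 - \rho_2$, and control $\rho_1-\rho_2$ through the difference of the two flows, bounded by $\int_0^t\|u_1-u_2\|_\infty$ via Gronwall. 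For $T$ small the resulting constant is $<1$, giving a unique local fixed point, and uniqueness on the whole interval follows from the same difference estimate. Global existence comes by continuation: the bounds grow only through Gronwall factors $e^{Qt}$ and cannot blow up in finite time since the coupling $\rho \mapsto u$ is linear, so the solution extends to every $T>0$. The time regularity $\rho \in W^{1,\infty}(0,T;X_\beta)$ is then read off from $\partial_t\rho = -u\cdot\nabla\rho$, whose right-hand side lies in $X_\beta$ because $u$ is bounded and $\nabla\rho \in X_\beta$.

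The main obstacle I anticipate is the bookkeeping of the weighted estimates and closing them into a single self-consistent bound: the Stokes step only returns a velocity with a prescribed decay rate, and one must check that this decay is compatible with the weight comparison used in the transport step, uniformly on $[0,T]$. In particular the $L^1\cap L^\infty$ Wasserstein tools of Proposition \ref{stab_Stokes} are not directly available, since $X_\beta \not\subset L^1(\mathbb{R}^3)$ when $2<\beta\le 3$, so the entire stability analysis must be run within the weighted framework. The threshold $\beta>2$, which is exactly what makes the Oseen convolution converge and keeps the velocity bounded, is where the regularity assumption is genuinely used.
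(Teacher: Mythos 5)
This statement is not proved in the paper at all: it is imported verbatim from the cited reference \cite{Hofer} and used as a black box in the proof of Theorem \ref{thm}. So the only meaningful comparison is with the strategy of that reference, and your local-in-time construction (a Picard/Banach iteration alternating the Oseen convolution with the characteristic flow, with all estimates run in the weighted space $X_\beta$) is indeed the standard route: the bound $\|u\|_\infty + \|\nabla u\|_\infty \lesssim \|\rho\|_{X_\beta}$ from splitting the convolution, the weight comparison $1+|x|^\beta \le C(T)\bigl(1+|X(0,t,x)|^\beta\bigr)$ along bounded-speed characteristics, the chain rule for $\nabla\rho$, and the contraction via $|\rho_0(X_1)-\rho_0(X_2)| \le |X_1-X_2|$ times a weighted bound on $\nabla\rho_0$ --- where, as you implicitly use, the hypothesis $\nabla\rho_0\in X_\beta$ is exactly what makes the difference estimate close in $X_\beta$. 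Up to and including local existence and uniqueness, your sketch is sound.

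The genuine gap is the passage to global time. You assert that the bounds \enquote{grow only through Gronwall factors $e^{Qt}$ and cannot blow up in finite time since the coupling $\rho\mapsto u$ is linear.} Linearity does not help here: the constant $Q=\|u\|_{L^\infty(0,T;W^{1,\infty})}$ is itself controlled only by $\sup_{t\le T}\|\rho(t)\|_{X_\beta}$, while your transport step makes $\|\rho(t)\|_{X_\beta}$ grow with the displacement of characteristics, i.e.\ with $\int_0^t\|u\|_\infty$. Chasing the two estimates yields a superlinear differential inequality, at best $\frac{d}{dt}\|\rho(t)\|_{X_\beta}\lesssim \|u(t)\|_\infty\|\rho(t)\|_{X_\beta}\lesssim \|\rho(t)\|_{X_\beta}^2$, which is compatible with finite-time blowup, and iterating the local argument may produce a shrinking sequence of existence times. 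The missing ingredient is a velocity bound that does not see the growing weighted norm: since $\beta>2$ one has $X_\beta\hookrightarrow L^p\cap L^\infty$ for every $p>3/\beta$, so one may fix $3/\beta<p<3/2$; splitting $\Phi\sim|z|^{-1}$ and $\nabla\Phi\sim|z|^{-2}$ into near and far fields and applying H\"older gives $\|u(t)\|_{W^{1,\infty}}\le C\bigl(\|\rho(t)\|_{L^p}+\|\rho(t)\|_{L^\infty}\bigr)$, and both Lebesgue norms are \emph{conserved} because $\div u=0$ makes the flow measure preserving. Hence $Q$ is bounded uniformly in time by the data alone, $\int_0^t\|u\|_\infty$ grows at most linearly, $\|\rho(t)\|_{X_\beta}$ at most polynomially, and the local solution extends to every $T>0$. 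Note that this is a second, independent use of $\beta>2$ (it is exactly what permits $p<3/2$), and it is precisely the mechanism the present paper exploits in its own proof of Theorem \ref{thm}, where $\|\rho^n(t)\|_{L^1\cap L^\infty}\le\|\rho_0^n\|_{L^1\cap L^\infty}$ yields the time-uniform bound $\|u^n\|_{W^{1,\infty}}\le C$. Your correct remark that $X_\beta\not\subset L^1$ for $2<\beta\le 3$ should have been the hint: $L^1$ is unavailable, but $L^p$ with $3/\beta<p<3/2$ does the same job.
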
  

The idea of proof is then to regularize the initial data and use completeness and compactness arguments.
\begin{proof}[Proof of Theorem \ref{thm}]  
Let $\rho_0 \in L^\infty \cap L^1$ a measure with finite first moment. We introduce a mollifier $\chi \geq 0 $ such that $\int \chi=1$ and set $\rho^n_0=\rho_0\star \chi_n$ with $\chi_n(x)=n^3\chi(n x)$. It is then clear that $\rho_0^n $ has a finite first moment and
\begin{eqnarray*}
W_1(\rho_0,\rho_0^n) \underset{ n \to \infty}{\to} 0,& \|\rho_0^n \|_1 \leq \|\rho_0\|_1, &  \|\rho_0^n \|_\infty \leq C_\chi \|\rho_0\|_\infty.
\end{eqnarray*}
Since $\rho_0^n, \nabla \rho_0^n \in X_\beta$, according to Theorem \ref{thm_Hofer}, there exists $(\rho^n,u^n)$ the unique solution to the transport-Stokes equation \eqref{Vlasov_Stokes} for all $T>0$. On the other hand, stability estimates \eqref{formule_stab} and \eqref{formule2} yield for all $m>n\geq 0$
\begin{align*}
W_1(\rho^n,\rho^m) \leq W_1(\rho_0^n,\rho_0^m) + C e^{Q_m t} \|\rho^n\|_{L^1 \cap L^\infty} \int_0^t W_1(\rho^n(\tau), \rho^{m}(\tau))d \tau,
\end{align*}
with $$Q_m :=\underset{\tau \leq t}{\sup} \,  \Lip (u^{m}(\tau,\cdot))\leq \underset{\tau \leq t}{\sup}\, \|u^{m}(\tau,\cdot)\|_{W^{1,\infty}} $$
Since $\rho^n$ is transported by an incompressible fluid we have for all time $t\in [0,T]$:
$$
\|\rho^n (t)\|_{L^1 \cap L^\infty} \leq\|\rho_0^n\|_{L^1 \cap L^\infty} \leq C,
$$
and formula \eqref{stab_prop1} from Proposition \ref{prop1} yields
$$
\|u^n\|_{W^{1,\infty}} \leq C \|\rho^n\|_{L^1 \cap L^\infty} \leq C.
$$
Hence, Gronwall estimate yields
$$
W_1(\rho^n,\rho^m) \leq W_1(\rho^n_0,\rho^m_0)e^{Ct}.
$$
On the other hand, if we set $X^n$ the characteristic flow associated to $u^n$ we get 
$$
\int |x| \rho^{n}(dx) = \int |X^n(t,0,x)| \rho_0^n(dx)\leq \int |x| \rho_0^n(dx) + T \underset{[0,T]}{\sup} \|u^n(t,\cdot)\|_\infty \|\rho_0^n\|_1,
$$
which ensures that $\rho^n$ has a finite first moment. Hence $(\rho^n)_n$ is a Cauchy sequence in the complete space $L^\infty(0,T;\mathcal{P}_1)$ for all $T>0$ where $\mathcal{P}_1$ is the space of finite first moment measures metrized by the Wasserstein distance $W_1$ which is complete, see \cite[Theorem 6.16]{Villani}. This ensures the existence of a limit measure $\rho \in \mathcal{P}_1$ which lies in $L^1 \cap L^\infty$ thanks to the previous bounds. On the other hand, recall that for all compact sets $K$ we have for all $m>n \geq0$
$$\|u^{n} -u^m\|_{L^\infty(|0,T],L^1(K))}+\| \nabla u^{n}-\nabla u^m\|_{L^\infty(0,T;L^1(K))} \leq  C(K)\, \| W_1(\rho^n,\rho^m)\|_{L^\infty[0,T]} .$$
Hence, $u^n_{|K}$ and $ \nabla u^n_{|K}$ are Cauchy sequences in $L^\infty(0,T;L^1(K))$ and admit a limit in $ L^\infty(0,T; W^{1,\infty}(K))$. Finally $u \in L^\infty(0,T;W^{1,\infty} \cap W^{1,1}_{\text{loc}})$. \\
Thanks to the convergence, in the space of measure-valued functions, of $\rho^n$ to $\rho$ and the strong convergence of $u^n$ towards $u$ in $L^\infty(0,T; W^{1,1}_{\text{loc}})$ one can show that $(u,\rho)$ satisfies weakly the system:
\begin{equation*}
\left\{
\begin{array}{rcll}
\partial_t \rho+{\div}(u \rho ) &=& 0,& \text{ on $[0,T] \times \mathbb{R}^3,$}\\
- \Delta u+ \nabla p &=& -\rho e_3,& \text{ on $[0,T]\times \mathbb{R}^3$},\\
\div u & =&0,& \text{ on $[0,T]\times \mathbb{R}^3$},\\
\rho(0,\cdot) &= & \rho_0, & \text{ on $\mathbb{R}^3$}.
\end{array}
\right.
\end{equation*}
Moreover, if we assume that there exists two fixed-points $(u_i,\rho_i)$, $i=1,2$, then estimate
$$
\| W_1(\rho_1,\rho_2)\|_{L^\infty[0,T]}  \leq C T\|\rho_1\| e^{C \|\rho_0\|T}\| W_1(\rho_1,\rho_2)\|_{L^\infty[0,T]} ,
$$
ensures uniqueness for $T>0$ small enough.
\end{proof}
\subsection{Proof of the Hadamard and Rybczynski result}\label{TS_sphere}
The proof of Proposition \ref{conservation_sphere} is a consequence of the following Lemma which was first proven by Hadamard and Rybczynski
\begin{lemme}[Hadamard-–Rybczynski]\label{lemme_hadamard}
Let $B_0=B(0,1)$, $u_0=-\Phi*1_{B_0}e_3$. $v^*=-\frac{4}{15}e_3$. We have
$$
(u_0-v^*)\cdot n=0 \text{ on }  \partial B_0,
$$
with $n(x)$ the unit normal vector. Sine $u_0$ is axisymmetric (invariant under any rotation with respect to the vertical axis $e_3$), it is equivalent to
\begin{eqnarray*}
(u_0(e(\theta,0))-v^* )\cdot e(\theta,0) =0,&
 \text{ for all } \theta \in [0,\pi],
\end{eqnarray*}
where $e(\theta,0)= (\sin(\theta),0,\cos(\theta) )$. 
\end{lemme}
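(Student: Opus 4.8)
The statement is equivalent to computing the radial component of $u_0$ on the unit sphere and comparing it with that of $v^*$: since $x\cdot e_3=\cos\theta$ for $x=e(\theta,0)$, I must show $u_0(x)\cdot x = v^*\cdot x = -\tfrac{4}{15}\cos\theta$ whenever $|x|=1$. The plan is to evaluate $u_0=-\Phi*(1_{B_0}e_3)$ explicitly by turning the Oseen tensor into a constant-coefficient differential operator acting on a single radial potential. The starting point is the identity (written in components)
$$\Phi_{ij}(x)=\frac{1}{8\pi}\left(\delta_{ij}\Delta-\partial_i\partial_j\right)|x|,$$
which follows at once from $\Delta|x|=2/|x|$ and $\partial_i\partial_j|x|=\delta_{ij}/|x|-x_ix_j/|x|^3$.

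Next I would introduce the two radial scalar potentials
$$\psi(x)=\int_{B_0}\frac{dy}{|x-y|},\qquad \Psi(x)=\int_{B_0}|x-y|\,dy.$$
Because $\Delta_x|x-y|=2/|x-y|$, differentiating under the integral sign gives $\Delta\Psi=2\psi$, and applying the identity above inside the convolution yields
$$u_0=-\frac{1}{8\pi}\left(\Delta\Psi\,e_3-\nabla\partial_3\Psi\right)=-\frac{1}{8\pi}\left(2\psi\,e_3-\nabla\partial_3\Psi\right).$$
Both potentials are radial and explicit: $\psi$ is the Newtonian potential of the uniform ball, equal to $2\pi-\tfrac{2\pi}{3}|x|^2$ inside, while $\Psi=F(|x|)$ solves the radial equation $F''+\tfrac{2}{r}F'=2\psi$, whose interior solution regular at the origin is $F(r)=c_0+\tfrac{2\pi}{3}r^2-\tfrac{\pi}{15}r^4$.

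Finally I would reduce the radial component on the sphere. Writing $\partial_3\Psi=x_3\,F'(r)/r$ and computing $x\cdot\nabla$ on a function of the form $x_3\,G(r)$ gives the clean identity $x\cdot\nabla\partial_3\Psi=x_3\,F''(r)$, so that
$$u_0(x)\cdot x=-\frac{x_3}{8\pi}\left(2\psi(r)-F''(r)\right).$$
Evaluating at $r=1$, where $\psi(1)=\tfrac{4\pi}{3}$ and $F''(1)=\tfrac{8\pi}{15}$, gives $2\psi(1)-F''(1)=\tfrac{32\pi}{15}$ and hence $u_0(x)\cdot x=-\tfrac{4}{15}x_3=-\tfrac{4}{15}\cos\theta=v^*\cdot x$, which is the claim; the equivalence with the meridian formulation uses only the axisymmetry of $u_0$, itself inherited from the rotational equivariance of Stokes flow and the symmetry of the source $1_{B_0}e_3$. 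The only genuine computation is the explicit determination of the radial potential $\Psi$ (equivalently, the angular integrations in the Oseen convolution); the point of the whole approach is that the identity $\Delta\Psi=2\psi$ collapses the anisotropic part $x_ix_j/|x|^3$ of the Oseen tensor into second derivatives of one scalar function, reducing everything to a one-dimensional ODE and a pair of boundary values.
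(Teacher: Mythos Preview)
Your argument is correct. The identity $\Phi_{ij}=\tfrac{1}{8\pi}(\delta_{ij}\Delta-\partial_i\partial_j)|x|$, the formulas for $\psi$ and $F$, the reduction $x\cdot\nabla\partial_3\Psi=x_3F''(r)$, and the final evaluation $2\psi(1)-F''(1)=\tfrac{32\pi}{15}$ all check out; differentiation under the integral is harmless since $\nabla^2|x|\sim 1/|x|$ is locally integrable in $\mathbb{R}^3$, so $\Psi\in C^2$.

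The paper takes a different route. It first converts the volume convolution into a surface integral on $\partial B_0$ (an integration by parts that is also used later in deriving the contour evolution model), then applies a rotation $Q(\theta)$ sending $e(\theta,0)$ to $e_3$, and reduces the computation to four explicit moment integrals of the type $\int_{\partial B(0,1)} \omega_i\omega_j\,|e_3-\omega|^{-1}\,d\sigma(\omega)$, which are evaluated by hand. This yields the full vector $u_0(e(\theta,0))$ on the sphere, not just its normal component. Your approach, by contrast, exploits the biharmonic structure of the Stokeslet to collapse everything to two radial potentials governed by scalar ODEs; it is more structural, requires no explicit angular integrations, and would extend verbatim to any radially symmetric density $\rho_0=\rho_0(|x|)$ by merely changing the right-hand side of the ODE for $F$. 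The trade-off is that you only obtain $u_0\cdot n$, whereas the paper's computation gives all components of $u_0$ on the boundary, a byproduct it does not actually need for the lemma but which connects naturally to the surface formulation used in the rest of the paper.
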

We give below a proof relying on explicit computations
\begin{proof}
Let $\theta\in [0,\pi]$ and $e(\theta,0)\in \partial B(0,1)$. We recall formula \eqref{formule_int_u}
$$
u(e(\theta,0))=-\frac{1}{8\pi}\int_{\partial B(0,1)} \left( \frac{(e(\theta,0)-y)\cdot e_3}{|e(\theta,0)-y|}n(y)-\frac{e(\theta,0)\cdot y -1}{|e(\theta,0)-y|}e_3 \right ) d\sigma({y}).
$$
We set $Q(\theta)=\begin{pmatrix}
\cos(\theta) & 0& \sin(\theta) \\
0&1&0\\
-\sin(\theta) & 0& \cos(\theta)
\end{pmatrix}$ the rotation matrix such that $e(\theta,0)=Q(\theta) e_3$ with $e_3=(0,0,1)$ and use the change of variable $y=Q(\theta) \omega$, $\omega \in \partial B(0,1)$ such that $n(y)=y=Q(\theta) n(\omega)=Q(\theta) w$ and $d\sigma(y)=d\sigma(w)$. We drop the dependencies with respect to $\theta$ and write
\begin{align*}
-8\pi u(e)&= Q \left( \int_{\partial B(0,1)} \frac{(Q e)_3-(Q\omega)_3}{|e_3-\omega|}\omega d\sigma(\omega) \right) -\int_{\partial B(0,1)}\frac{(Qe_3)\cdot (Q\omega) -1}{|e_3-\omega|}e_3  d\sigma(\omega)\\
&=(Q e)_3 Q \left( \int_{\partial B(0,1)} \frac{w}{|e_3-\omega|}d\sigma(\omega) \right) -Q \left( \int_{\partial B(0,1)} \frac{(Q\omega)_3}{|e_3-\omega|}\omega d\sigma(\omega) \right)  \\
&-  \left(\int_{\partial B(0,1)} \frac{\omega_3}{|e_3-\omega|} d\sigma(\omega) \right) e_3 + \left(\int_{\partial B(0,1)} \frac{1}{|e_3-\omega|} d\sigma(\omega) \right) e_3.
\end{align*}
We have $(Q\omega)_3=-\sin(\theta)\omega_1+\cos(\theta)\omega_3 $, direct computations yield
\begin{eqnarray*}
\int_{\partial B(0,1)} \frac{w}{|e_3-\omega|}d\sigma(\omega)= \frac{4\pi}{3}e_3, &\displaystyle{\int_{\partial B(0,1)} \frac{1}{|e_3-\omega|}d\sigma(\omega)= 4\pi} \\ 
 \int_{\partial B(0,1)} \frac{w_1}{|e_3-\omega|}\omega d\sigma(\omega)=\frac{16}{15} \pi e_1,&\displaystyle{ \int_{\partial B(0,1)} \frac{w_3}{|e_3-\omega|}\omega d\sigma(\omega)=\frac{14}{15} 2 \pi e_3},
\end{eqnarray*}
where $e_1=(1,0,0)$ hence we get 
\begin{align*}
-8\pi u(e)&= \cos(\theta)\frac{4\pi}{3}Qe_3-Q\left(-\sin(\theta)\frac{16}{15}\pi e_1+\cos(\theta)\frac{14}{15}2\pi e_3 \right)\\
&-\frac{4\pi}{3}e_3+4 \pi e_3\\
&=-\cos(\theta) Qe_3 \frac{8\pi}{15}+\sin(\theta) \frac{16\pi}{15}Qe_1+ \frac{8\pi}{3}e_3 ,
\end{align*}
which yields the desired result.
\end{proof}

The proof of Proposition \ref{conservation_sphere} is then straightforward
\begin{proof}
Using the formulas 
$$u(t,x)=u_0(x-v^*t),\:\: \rho(t,x)=\rho_0(x-v^*t),
$$
we get for the transport-Stokes equation 
$$ \partial_t \rho+\nabla \rho\cdot u= ( \nabla \rho_0\cdot (u_0-v^*))_{|_{(\cdot-v^*t)}}=0,$$
the last equality comes from Lemma \ref{lemme_hadamard} since $\nabla \rho_0=n s^1 $ where $s^1$ is the surface measure on the sphere and $n$ the unit normal on the sphere.
\end{proof}

\section{On the derivation of a surface evolution model}
\subsection{Derivation of the hyperbolic equation}
In this part we investigate the contour evolution in the case where the initial blob is axisymmetric and can be described by a spherical parametrization
$$
B_0=\left\{r_0(\theta) \begin{pmatrix}
\cos(\phi) \sin(\theta)\\
\sin(\phi) \sin(\theta)\\
\cos(\theta)
\end{pmatrix}, (\theta,\phi)\in[0,\pi]\times[0,2\pi]\right\}.
$$
and denote by $B_t$ the domain at time $t$. In order to use a spherical parametrization we set $c(t)=(0,0,c_3(t))$ the position at time $t$ of a reference point and write $B_t= c(t)+ \tilde{B}_t$ where
$$
\tilde{B}_t=\left\{r(t,\theta) \begin{pmatrix}
\cos(\phi) \sin(\theta)\\
\sin(\phi) \sin(\theta)\\
\cos(\theta)
\end{pmatrix}, (\theta,\phi)\in[0,\pi]\times[0,2\pi]\right\}.
$$
The velocity of the point $c(t)$ can be choosen arbitrarily and in particular can be choosen such that $c(t)$ is transported along the flow meaning that $\dot{c}= u(t,c)$.\\
Using the convolution formula for the velocity field $u$ together with the weak formulation of \eqref{Vlasov_Stokes}  we get 
\begin{prop}\label{prop_model}
$r$ satisfies the following hyperbolic equation
\begin{equation*}
\left \{
\begin{array}{rcl}
\partial_t r + \partial_\theta r A_1[r] &= &A_2[r], \\
r(0,\cdot)&=& r_0.
\end{array}
\right.
\end{equation*}
In the case where the reference point $c=(0,0,c_3)$ is transported along the flow \textit{i.e.} $ u(c)=\dot{c}$ we have $c=c[r]=(0,0,c[r]_3)$ and
\begin{equation*}
\left\{
\begin{array}{rcl}
\dot{c}[r]_3(t)&=& -\frac{1}{4}\displaystyle{\int_0^\pi} r^2(t,\bar\theta) \sin(\bar \theta) \left(1-\frac{1}{2} \sin^2(\bar \theta)\right) d \bar \theta ,\\
c[r]_3(0)&=&0,
\end{array}
\right.
\end{equation*}
The operators $A_1[r]$ and $A_2[r]$ are defined as follows
\begin{multline}\label{eqA1}
A_1[r](t,\theta) := \\
-\frac{1}{8\pi r(t,\theta)} \int_0^{2\pi} \int_0^\pi \frac{r(t,\bar  \theta) \sin(\bar  \theta) - \partial_\theta r(t,\bar \theta) \cos(\bar \theta)}{\beta[r](t,\theta,\bar \theta,\phi)} r(t,\bar \theta) \sin(\bar \theta) \Big( r(t, \theta) \cos(\phi)\\-r(t,\bar \theta)\Big \{ \cos(\bar \theta) \cos( \theta)\cos(\phi)+\sin (\bar \theta) \sin( \theta)  \Big \} \Big) d \bar \theta d\phi +\frac{\dot{c}_3\sin(\theta)}{r(t,\theta)}
\end{multline}
\begin{multline}\label{eqA2}
A_2[r](t, \theta) := \\
-\frac{1}{8\pi} \int_0^{2\pi} \int_0^\pi \frac{r(t,\bar \theta) \sin(\bar \theta) - \partial_\theta r(t,\bar\theta) \cos(\bar\theta)}{\beta[r](t,\theta,\bar \theta,\phi)} r(t,\bar \theta) \sin(\bar\theta) \Big(-r(t,\bar\theta)\sin(\theta)\cos(\bar\theta)  \cos(\phi)\\+r(t,\bar\theta)\cos( \theta)\sin(\bar\theta) \Big ) d\bar \theta d\phi- \dot{c}_3 cos(\theta) \,.
\end{multline}
\begin{equation}\label{eqB}
\beta[r](\theta,\bar \theta,\phi)^2= r^2(\theta)+r^2(\bar \theta)-2 r(\theta) r(\bar \theta)(\sin (\theta) \sin (\bar \theta) \cos(\phi) + \cos(\theta) \cos(\bar \theta) ).
\end{equation}
\begin{rem}\label{consv_vol}
The volume of the drop is conserved in time
$$
 \int_0^\pi \partial_t r(t,\theta) r^2(t,\theta) \sin(\theta) d\theta = 0.
$$
\end{rem}
\end{prop}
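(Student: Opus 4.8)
The plan is to regard $\partial B_t$ as a material surface carried by the incompressible Stokes field $u$, turn this kinematic condition into a scalar evolution law for the graph radius $r$, and then make the two transport coefficients explicit through the Stokeslet single-layer representation of $u$ on the surface. Equivalently one may test the weak formulation of \eqref{Vlasov_Stokes} against functions adapted to the spherical parametrization; the kinematic route is the most direct and is what I would write.

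\emph{Step 1 (kinematic law).} Since $\div u=0$, the density $\rho=1_{B_t}$ solves $\partial_t\rho+u\cdot\nabla\rho=0$, so $\partial B_t$ moves with normal speed $u\cdot n$. I would pass to the moving frame $y=x-c(t)$, in which $\partial\tilde B_t$ is the zero set of $F(t,y)=|y|-r(t,\theta_y)$ (here $\theta_y$ is the polar angle of $y$) and the advecting field is $u-\dot c$. With the spherical frame $e(\theta,\phi)$, $\partial_\theta e$ (a unit vector) and $\partial_\phi e=\sin\theta(-\sin\phi,\cos\phi,0)$, one has $\nabla\theta_y=|y|^{-1}\partial_\theta e$, hence on the surface $\partial_tF=-\partial_t r$ and $\nabla_yF=e(\theta,\phi)-\frac{\partial_\theta r}{r}\,\partial_\theta e$. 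Inserting these into $\partial_tF+(u-\dot c)\cdot\nabla_yF=0$ and using $\dot c=(0,0,\dot c_3)$ gives
$$\partial_t r+\partial_\theta r\,\frac1{r}\big(u\cdot\partial_\theta e+\dot c_3\sin\theta\big)=u\cdot e(\theta,0)-\dot c_3\cos\theta,$$
which identifies $A_1[r]=\frac1r\,u\cdot\partial_\theta e+\frac{\dot c_3\sin\theta}{r}$ and $A_2[r]=u\cdot e(\theta,0)-\dot c_3\cos\theta$; the center terms already match the last summands of \eqref{eqA1}--\eqref{eqA2}, and by axisymmetry I evaluate at azimuth $\phi=0$.

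\emph{Step 2 (making $u$ explicit).} By translation covariance of Stokes, $u(c+y)=\tilde u(y)$ with $\tilde u=-\Phi*(1_{\tilde B_t}e_3)$, and I would use the single-layer formula \eqref{formule_int_u}, written for the domain $\tilde B_t$ as
$$\tilde u(y)=-\frac{1}{8\pi}\int_{\partial\tilde B_t}\left(\frac{(y-y')\cdot e_3}{|y-y'|}\,n(y')-\frac{(y-y')\cdot n(y')}{|y-y'|}\,e_3\right)d\sigma(y').$$
Parametrizing $y'=r(\bar\theta)e(\bar\theta,\phi)$ I compute the unnormalized normal
$$\partial_{\bar\theta}y'\times\partial_\phi y'=r(\bar\theta)\sin\bar\theta\left(r(\bar\theta)\,e(\bar\theta,\phi)-\partial_\theta r(\bar\theta)\,\partial_\theta e(\bar\theta,\phi)\right),$$
whose horizontal part is $r(\bar\theta)\sin\bar\theta\big(r(\bar\theta)\sin\bar\theta-\partial_\theta r(\bar\theta)\cos\bar\theta\big)(\cos\phi,\sin\phi,0)$; this is the source of the recurring factor $\big(r(\bar\theta)\sin\bar\theta-\partial_\theta r\cos\bar\theta\big)r(\bar\theta)\sin\bar\theta$, while $|y-y'|=\beta[r]$ follows from the spherical law of cosines, giving \eqref{eqB}. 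Evaluating $y=r(\theta)e(\theta,0)$ and taking the scalar products $\tilde u\cdot e(\theta,0)$ and $\tilde u\cdot\partial_\theta e(\theta,0)$ of this representation, then collecting the horizontal and vertical contributions of $n(y')\,d\sigma$, reproduces the integrands of \eqref{eqA2} and of $r(\theta)$ times \eqref{eqA1}.

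\emph{Steps 3--4 (center and volume).} If $c$ is advected, $\dot c=u(c)$; since $c$ lies on the symmetry axis the horizontal components vanish by axisymmetry, leaving $\dot c=(0,0,\dot c_3)$ with $\dot c_3=u_3(c)=\tilde u_3(0)$. A direct Stokeslet computation, $\Phi(y')e_3\cdot e_3=\frac1{8\pi}\big(|y'|^{-1}+(y_3')^2|y'|^{-3}\big)$, in spherical coordinates gives $\dot c_3=-\frac18\int_0^\pi r^2(\bar\theta)(1+\cos^2\bar\theta)\sin\bar\theta\,d\bar\theta$, and the identity $1+\cos^2\bar\theta=2\big(1-\frac12\sin^2\bar\theta\big)$ turns this into \eqref{dotc_3}. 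Finally, Remark \ref{consv_vol} follows from incompressibility: $|B_t|=\frac{2\pi}{3}\int_0^\pi r^3\sin\theta\,d\theta$ is constant, and differentiating in $t$ gives $2\pi\int_0^\pi\partial_t r\,r^2\sin\theta\,d\theta=0$. The main obstacle is the bookkeeping in Step 2: carrying out the two scalar products of the single-layer kernel against $e(\theta,0)$ and $\partial_\theta e(\theta,0)$ and checking that, after cancellations, the surviving scalar factors match exactly the third parentheses in \eqref{eqA1} and \eqref{eqA2}.
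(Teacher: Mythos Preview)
Your proposal is correct and follows essentially the same route as the paper: derive the kinematic boundary condition (the paper obtains the identical relation $-\partial_t r+(u(c+re)-\dot c)\cdot e-\frac{\partial_\theta r}{r}(u(c+re)-\dot c)\cdot\partial_\theta e=0$ by testing the weak formulation and integrating by parts, which is your level-set computation in different dress), then insert the single-layer representation \eqref{formule_int_u} with the surface element $s[r]=r\sin\bar\theta(r\,e-\partial_\theta r\,\partial_\theta e)$ to obtain the integrands of \eqref{eqA1}--\eqref{eqA2}. Your computations of $\dot c_3$ via the volume Stokeslet and of volume conservation by differentiating $|B_t|=\text{const}$ are slightly shorter than the paper's, which instead evaluates the boundary formula for $u(c)$ (followed by an integration by parts in $\bar\theta$) and verifies $\int_0^\pi\partial_t r\,r^2\sin\theta\,d\theta=0$ by plugging the hyperbolic equation back in and reducing to $\int_{\partial B_t}u\cdot n=0$; both arrive at the same expressions.
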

\begin{proof}[Proof of Proposition \ref{prop_model}]
In what follows we drop the dependencies with respect to time since the operators $A_1$ and $A_2$ depend on $t$ only through $r(t,\cdot)$.\\
Using the change of variable ${x}=c(t)+\tilde{x} \in{B}_t$, $\tilde{x}\in \tilde{B}_t$ the weak formulation of the transport equation writes
$$
\int_0^T \int_{ \tilde {B}_t} \partial_t \psi + \nabla \psi \cdot ( u(c(t)+\cdot)-\dot{c})d  \tilde x =0\,, \forall \psi \in \mathcal{C}^\infty_c([0,T] \times \mathbb{R}^3),
$$
with $\dot{c}=u(c)$. Since the flow preserves the rotational invariance, we define the spherical parametrization of $ \tilde{B}_t$ as follows:
$$
 \tilde{B}_t=\left \{z e(\theta,\phi), (\theta,\phi)\in[0,\pi]\times[0,2\pi], 0\leq z \leq r(t,\theta) \right \}\,,
$$
where \begin{equation*}
e(\theta,\phi)=\left(
\begin{array}{r}
\cos(\phi)\, \sin (\theta)\\
\sin(\phi)\, \sin(\theta)\\
\cos(\theta).
\end{array}
\right).
\end{equation*}
Passing to the spherical parametrization in the weak formulation and doing an integration by parts we get for all $\psi$ compactly supported in $(0,T)\times \mathbb{R}^3$
\begin{align}\label{formule_1}
\int_0^T \int_{\tilde{B}_t} \partial_t\psi(t,\tilde{x}) d\tilde{x} &= \int_0^T \int_{[0,\pi]\times[0,2\pi]}\int_0^{r(t,\theta)}\partial_t \psi(t,ze(\theta,\phi)) z^2 \sin(\theta) dz d\theta d\phi dt \notag \\
&= - \int_0^T  \int_{[0,\pi]\times[0,2\pi]}  \psi(t,r(t,\theta)e(\theta,\phi))\partial_t r(t,\theta) r^2(t,\theta) \sin(\theta) d\theta d\phi dt  ,
\end{align}
for the second term a direct integration by parts yields
\begin{align}\label{formule_2}
&\int_0^T \int_{\tilde{B}_t} \nabla \psi(t,\tilde{x})( u(c(t)+\cdot)-\dot{c}) d\tilde{x} =\int_0^T \int_{\partial \tilde{B}_t} \psi ( u(c(t)+\cdot)-\dot{c})\cdot n d\sigma dt\notag \\
&=  \int_0^T  \int_{[0,\pi]\times[0,2\pi]}  \psi(t,r(t,\theta)e(\theta,\phi)) ( u(c(t)+r(t,\theta)e(\theta,\phi))-\dot{c})\cdot s(\theta,\phi) d\theta d\phi dt,
\end{align}
where $s$ is the surface element on $\partial \tilde{B}_t$ such that the unit normal vector satisfies $n=\frac{s}{|s|}$ and we have 
\begin{equation}\label{surface_element}
s(\theta,\phi)=s[r](\theta,\phi)= \partial_\theta \tilde{y} \times \partial_\phi \tilde{y}= r^2\, \sin(\theta)\, e(\theta,\phi)-r'(\theta)\,r(\theta)\,\sin (\theta)\, \partial_\theta e(\theta,\phi).
\end{equation}
Gathering \eqref{formule_1}, \eqref{formule_2} and \eqref{surface_element} and droping the dependencies with respect to $(t,\theta)$ we get
\begin{equation}\label{eqt}
-\partial_t r + ( u(c+re)-\dot{c})\cdot e- \frac{\partial_\theta r}{r} ( u(c+re)-\dot{c})\cdot \partial_\theta e= 0.
\end{equation}
Hence we set 
\begin{eqnarray}
\displaystyle{A_1[r]= \frac{1}{r} ( u(c+re)-\dot{c})\cdot \partial_\theta e},& A_2[r]=( u(c+re)-\dot{c})\cdot e.
\end{eqnarray}
 We recall that for all $x\in \mathbb{R}^3$:
$$
u(x)=\frac{1}{8\pi}\int_{B_t} \left(-\frac{1}{|x-y|}e_3- \frac{(x-y)\cdot e_3}{|x-y|^3} (x-y) \right),
$$
which can be reformulated using an integration by parts as follows
\begin{equation}\label{formule_int_u}
u(x)=-\frac{1}{8\pi}\int_{\partial {B}_t} \left( \frac{(x_3-{y}_3)}{|x-{y}|}n(y)-\frac{(x-{y})\cdot n(y)}{|x-{y}|}e_3 \right ) d\sigma({y}).
\end{equation}
Using again the spherical parametrization of $\partial \tilde{B}_t$, we set $y=c+\tilde{y}$, where $\tilde{y}=r(t,\bar \theta) e(\bar \theta,\bar \phi)$ and
 $$x=c+\tilde{x}=c+r(t,{\theta})\,e( {\theta}, \phi)\in \partial B_t\:\,,\:( \theta, \phi) \in [0,\pi ] \times [0,2\pi].$$ 
We recall that the velocity does not depend on the azimuth angle $ \phi$ hence we can set $ \phi=0$. We define the operator $\mathcal{U}[r]$ as $\mathcal{U}[r](t,\theta)=u(c(t)+r(t, \theta) e( \theta, 0))$  and we have
\begin{multline}\label{vitesse}
\mathcal{U}[r](t,{\theta})\\=u(c+r(t, \theta) e( \theta, 0)) = -\frac{1}{8\pi}\int_{[0,\pi]\times[0,2\pi]} \Bigg(\frac{\left (r( \theta)e( \theta,0)- r( \bar \theta)e( \bar \theta,\bar \phi)\right)\cdot e_3}{\left |r( \theta)e( \theta,0)- r(\bar  \theta)e(\bar  \theta,\bar \phi)\right|} s[r](\bar \theta,\bar \phi)\\
 - \frac{\left(r(t, \theta)e( \theta,0)- r(\bar  \theta)e(\bar  \theta,\bar \phi)\right )\cdot s[r](\bar \theta,\bar \phi)}{\left | r(\theta)e( \theta,0)- r(\bar  \theta)e(\bar  \theta,\bar \phi)\right|}e_3 \Bigg) d\bar \theta d \bar \phi .
 \end{multline}
 We recall that $A_1$ and $A_2$ are given by
 \begin{eqnarray}\label{A_1,2}
\displaystyle{A_1[r]= \frac{1}{r} (\mathcal{U}[r]-\dot{c})\cdot \partial_\theta e},& A_2[r]=( \mathcal{U}[r]-\dot{c})\cdot e.
\end{eqnarray}
We first compute the components of the vector  $\mathcal{U}[r]$. For sake of clarity we use the shortcut 
$$\beta= |x-y|=|\tilde{x}-\tilde{y}|=|r(\theta)e(\theta,0)- r( \bar \theta)e(\bar  \theta,\bar \phi)|,$$ and we have:
\begin{equation}\label{beta}
\beta^2= r^2(\theta)+r^2(\bar \theta) -2 r(\theta) r(\bar \theta) \Big(cos(\bar  \phi) \sin (\theta) \sin (\bar \theta)+ \cos (\theta) \cos(\bar \theta) \Big).
\end{equation}
This yields:
\begin{multline}\label{def_u_1}
\mathcal{U}[r]_1= -\frac{1}{8\pi} \int_0^{2\pi} \int_0^\pi \frac{r( \theta) cos( \theta) - r(\bar \theta) \cos(\bar \theta)}{\beta} r(\bar \theta) \sin(\bar \theta)\\
\times \Big(r(\bar \theta)\sin(\bar \theta)-r'(\bar \theta)\cos(\bar \theta) \Big ) \cos(\bar \phi) d\bar \theta d\bar \phi\,,
\end{multline}
\begin{multline}\label{def_u_2}
\mathcal{U}[r]_2= -\frac{1}{8\pi} \int_0^{2\pi} \int_0^\pi \frac{r( \theta) \cos( \theta) - r(\bar \theta) \cos(\bar \theta)}{\beta} r(\bar \theta) \sin(\bar \theta)\\
\times \Big(r(\bar \theta)\sin(\bar \theta)-r'(\bar \theta)\cos(\bar \theta) \Big ) \sin(\bar \phi) d\bar \theta d\bar \phi\,,
\end{multline}
\begin{multline}\label{def_u_3}
\mathcal{U}[r]_3= -\frac{1}{8\pi} \int_0^{2\pi} \int_0^\pi \frac{r( \bar \theta) \sin(\bar \theta) - r'(\bar \theta) \cos(\bar \theta)}{\beta}r(\bar \theta) \sin (\bar \theta)\\
\times \Big\{- r( \theta) \sin ( \theta) \cos(\bar \phi) +r(\bar \theta) \sin (\bar \theta) \Big\} d\bar \theta d\bar \phi
\end{multline}
We can now compute $\mathcal{U}[r]\cdot e= u \cdot e(\theta, 0)$ and $\mathcal{U}[r]\cdot \partial_\theta e= u \cdot \partial_\theta e(\theta,0)$. We get:
\begin{multline}\label{u_r}
\mathcal{U}[r]\cdot e = -\frac{1}{8\pi} \int_0^{2\pi} \int_0^\pi \frac{r(\bar  \theta) \sin(\bar  \theta) - r'(\bar \theta) \cos(\bar \theta)}{\beta} r(\bar \theta) \sin(\bar \theta) \Big(-r(\bar \theta)\sin( \theta)\cos(\bar \theta)  \cos(\bar  \phi)\\+r(\bar \theta)\cos( \theta)\sin(\bar \theta) \Big ) d\bar \theta d\bar \phi\,,
\end{multline}
\begin{multline}\label{u_teta}
\mathcal{U}[r]\cdot \partial_\theta e = -\frac{1}{8\pi} \int_0^{2\pi} \int_0^\pi \frac{r( \bar \theta) \sin(\bar  \theta) - r'(\bar \theta) \cos(\bar \theta)}{\beta} r \sin(\bar \theta) \Big( r( \theta) \cos(\bar \phi)\\-r(\bar \theta)\Big \{ \cos(\bar \theta) \cos(\theta)\cos(\bar \phi)+\sin (\bar \theta) \sin(\theta)  \Big \} \Big) d\bar \theta d\bar \phi\,.
\end{multline}
Finally if we assume $u(c)=\dot{c}$ we get:
$$
\dot{c}=u(c)=- \frac{1}{8\pi}\int_{\partial \tilde{B}_t} \left (\frac{-\tilde{y}_3}{[\tilde{y}|}s+e_3 \frac{\tilde{y} \cdot s}{|\tilde{y}|} \right) d\sigma(\tilde y)\, ,
$$
recall that $|\tilde y | =r(\theta)$ and since $e \perp\partial_\theta e$ we get:
\begin{align*} \tilde{y} \cdot s&= r(\theta)e(\theta,\phi) \cdot \left (r^2(\theta)\, \sin(\theta)\, e(\theta,\phi)-r'(\theta)\,r(\theta)\,\sin (\theta)\, \partial_\theta e(\theta,\phi) \right)\\
&= r^3(\theta) \sin(\theta).
\end{align*}
This yields:
\begin{align*}
\dot{c}_1 &= -\frac{1}{8\pi} \int \int -cos(\theta) \left ( r^2(\theta)\, \sin(\theta)\,  \cos(\phi) \sin (\theta) -r'(\theta)\,r(\theta)\,\sin (\theta)\,\cos(\phi) \cos(\theta)\right)=0\,,\\
\dot{c}_2 &= -\frac{1}{8\pi} \int \int -cos(\theta) \left ( r^2(\theta)\, \sin(\theta)\,  \sin(\phi) \sin (\theta) -r'(\theta)\,r(\theta)\,\sin (\theta)\,\sin(\phi) \cos(\theta)\right)=0\,.
\end{align*}
\begin{align*}
\dot{c}_3&=  -\frac{1}{8\pi} \int_0^{2\pi} \int_0^\pi \Big( -cos(\theta) \left ( r^2(\theta)\, \sin(\theta)\, \cos (\theta)+r'(\theta)\,r(\theta)\,\sin^2\ (\theta)\right)\notag \\
& + r^2(\theta) \sin (\theta)\Big ) d\theta d\phi\,,\notag \\
&= -\frac{1}{4} \int_0^\pi \left( r^2(\theta) \sin^3(\theta)- r'(\theta) r(\theta) \cos(\theta) \sin^2(\theta) \right ) d\theta \,,\notag \\
&=  -\frac{1}{4} \int_0^\pi \left( r^2(\theta) \sin^3(\theta)+\frac{1}{2} r^2(\theta) \Big(-\sin^3(\theta)+2\cos^2(\theta)\sin(\theta)  \Big) \right) d\theta\,,\notag \\
&=  -\frac{1}{4}  \int_0^\pi  \frac{1}{2} r^2(\theta) \Big(-\sin^3(\theta)+2\sin(\theta)  \Big)  d\theta\,,\notag \\
&=  -\frac{1}{4}\int_0^\pi   r^2(\theta)\sin(\theta) \Big(1-\frac{1}{2}\sin^2(\theta)  \Big)  d\theta  < 0.\notag \\
\end{align*}
We conclude by replacing formulas \eqref{A_1,2} and \eqref{u_r}, \eqref{u_teta} in \eqref{eqt}. For the volume conservation, direct computations using \eqref{surface_element} yield
\begin{align*}
&\int_0^\pi \partial_t r(t,\theta) r^2(t,\theta) \sin(\theta) d \theta\\
& = \int_0^\pi A_2[r](t,\theta) r^2(t,\theta) \sin(\theta) - \partial_\theta r(t,\theta) A_1[r](t,\theta) r^2(t,\theta) \sin(\theta) d\theta \\
&= \int_0^\theta r^2 \sin(\theta)(u(c+re(\theta,0))-\dot{c})\cdot  e(\theta,0)- r \partial_\theta r\sin(\theta)(u(c+re(\theta,0))-\dot{c})\cdot  \partial_\theta e(\theta,0)\\
&= \int_0^\theta (u(c+re(\theta,0))-\dot{c})\cdot s(\theta,0) d\theta \\
&= \frac{1}{2\pi}\int_0^{2\pi}\int_0^\theta u(c+re(\theta,\phi)) \cdot s(\theta, \phi) d\theta d \phi - \dot{c}_3  \int_0^\pi  \partial_{\theta}\left(\frac{1}{2}r^2(t,\theta) \sin^2(\theta) \right)d \theta \\
&= \frac{1}{2\pi} \int_{\partial \tilde{B}}u(c+x) \cdot n(x) d\sigma(x)\\
&=  \frac{1}{2\pi} \int_{\partial B} u \cdot n =0.
\end{align*}

%
\end{proof}
\subsection{Proof of the local existence and uniqueness Theorem \ref{thm2}}
This section is devoted to the proof of local existence and uniqueness of a solution for equation \eqref{equation_r_transport}. Given $r\in \mathcal{C}(0,\pi)$, we recall the definition of the following quantity 
$$
|r|_*=\underset{(0,\pi)}{\inf}r(\theta).
$$
\begin{proof}
The main idea is to apply a fixed-point argument.
We recall that the operators $A_1$ and $A_2$ are defined using the velocity field $u$ defined in \eqref{vitesse}. It is possible to formulate otherwise the velocity $u$ using a spherical parametrization of the droplet $B_t=\{c(t)+z e(\theta,\phi), (\bar \theta,\bar \phi) \in (0,\pi)\times(0,2\pi), 0 \leq z \leq r(\bar \theta) \}$. this yields the following formula for $u$ 
\begin{equation}\label{vitesse_bis}
\mathcal{U}[r]( \theta) = \int_{(0,\pi)\times(0,2\pi)}\int_0^{r(\bar \theta)} \Phi(r( \theta ) e( \theta,0)-z e(\bar \theta,\bar \phi)) z^2 \sin(\bar \theta) dz d \bar \theta d\bar \phi,
\end{equation}
with $\Phi$ the Oseen tensor, see \eqref{c1:Oseen}. With this definition, the operator $\mathcal{U}[r]$ satisfies the following estimates for $r\in W^{1,\infty}$ such that $|r|_*>1$
\begin{align*}
\left|\mathcal{U}[r]( \theta) \right|& \leq C  \int_{(0,\pi)\times(0,2\pi)}\int_0^{r(\bar \theta)}  \frac{z^2 dz}{| r( \theta ) e(\theta,0)-z e(\bar \theta,\bar \phi)|} \sin(\bar \theta) d\bar \theta d\bar \phi,\\
&\leq \frac{\|r\|_\infty^{5/2}}{\sqrt{|r|_*}}\int_{0}^\pi \frac{\sin(\bar \theta) d\bar  \theta}{|e(\bar \theta,\bar \phi)-e( \theta,0) |}
\end{align*}
where we used the fact that 
\begin{align*}
| r( \theta ) e( \theta,0)-z e(\bar \theta,\bar \phi)|^2&= z^2 +r(\theta)^2-2z r(\theta)e( \theta,0)\cdot e(\bar \theta,\bar \phi)\\
&=(z-r( \theta))^2+z r( \theta)|e(\bar \theta,\bar \phi)-e( \theta,0) |^2 \\
&\geq   z r(\theta)|e(\bar \theta,\bar \phi)-e(\theta,0) |^2,
\end{align*} 
we conclude using Lemma \ref{lemme_int_S2}. For the derivative of $\mathcal{U}[r]$ we use the shortcuts $ e=e( \theta,0$), $\bar e=e(\bar \theta,\bar \phi)$, $ r=r(\theta)$, $\bar{r}=r(\bar \theta)$ and obtain after an integration on $z$
\begin{align*}
&\left| \partial_\theta \mathcal{U}[r]( \theta) \right|\leq C (|r( \theta)|+|\partial_\theta r(\theta)|)\int_0^{2\pi}\int_{0}^\pi \int_0^{r(\bar \theta)}  \frac{z^2 dz}{| r( \theta ) e( \theta,0)-z e(\bar \theta,\bar \phi)|^2} \sin(\bar \theta) d\bar \theta d\bar \phi, \notag\\
&= C  \|r\|_{1,\infty} \int_0^{2\pi}\int_{0}^\pi\int_0^{r(\theta)}  \frac{z^2 dz}{(z-{r} {e}\cdot \bar e)^2+ {r}^2(1-\bar{e}\cdot{e}^2)} \sin(\bar \theta) d\bar \theta d\bar \phi, \notag \\
&= C  \|r\|_{1,\infty}\int_0^{2\pi}\int_{0}^\pi\Big(r(\bar \theta)+{r} e \cdot \bar{e}\log\frac{|re-\bar{r} e|}{{r}} \notag \\
&+{r} \left(\frac{2e\cdot\bar{e}^2-1}{\sqrt{1-e\cdot\bar{e}^2}}\right)\left[\arctan\left(\frac{r(\bar \theta)-{r} e\cdot\bar{e}}{{r}\sqrt{1-e\cdot\bar{e}^2}} \right) + \arctan\left(  \frac{ \bar{e}\cdot e}{\sqrt{1-e\cdot\bar{e}^2}}\right) \right] \Big) \sin(\bar \theta)d \bar \theta d \bar \phi \notag  \\
&\leq C  \|r\|_{1,\infty}^2 \Bigg(1+\frac{\|r\|_\infty}{|r|_*}\left|\log\frac{\|r\|_\infty}{|r|_*}\right| \int_0^{2\pi}\int_{0}^\pi \frac{\sin(\bar \theta) d\bar  \theta d \bar \phi}{|e(\bar \theta,\bar \phi)-e( \theta,0) |}\\
&+\int_0^{2\pi}\int_{0}^\pi \frac{\sin(\bar \theta) d\bar  \theta d \bar \phi }{\sqrt{1-e(\theta,0)\cdot e(\bar \theta, \bar \phi)^2}} \Bigg),
\end{align*}
where we used the fact that $z \log(z)$ is uniformly bounded and that $|re-\bar{r}\bar{e}|\geq |r|_*|e-\bar{e}|$. We conclude using Lemma \ref{lemme_int_S2}.\\
Let $r_1, r_2 \in \mathcal{C}(0,\pi)$, $|r_1|_*,|r_2|_*>0$, reproducing the same arguments as previously we have the following stability estimate 
\begin{align*}
&\left| \mathcal{U}[r_1](\theta) - \mathcal{U}[r_2](\theta) \right| \notag\\
 & \leq \int_{(0,\pi)\times(0,2\pi)}\left|\int_{r_1(\bar \theta)}^{r_2(\bar \theta)} \frac{z^2 dz}{|{r}_1{e}-z\bar{e}|} \right|\sin(\bar \theta)d\bar \theta d\bar  \phi \notag \\
&+\|{r}_1-{r}_2\|_\infty \int_{(0,\pi)\times(0,2\pi)}\int_0^{r_2(\bar \theta)} \left(\frac{z^2 dz}{|{r}_1{e}-z\bar e|^2}+\frac{z^2 dz}{|{r}_2{e}-z\bar e|^2} \right) dz d \bar \theta d \bar \phi \notag \\
&\leq C \|r_1-r_2\|_\infty \Bigg(\frac{\|r_1\|_\infty^{3/2}+\|r_2\|_\infty^{3/2}}{\sqrt{|r_1|_*}} +(\|r_1\|_\infty +\|r_2\|_\infty)\\
&\times \Bigg [1+(\|r_1\|_\infty +\|r_2\|_\infty)\left(\frac{1}{|r_1|_*}+ \frac{1}{|r_2|_*} \right ) \left| \log\left(\frac{(\|r_1\|_\infty +\|r_2\|_\infty)^2}{|r_1|_* |r_2|_*} \right)\right| \Bigg]\Bigg).
\end{align*}
On the other hand since $\dot{c}[r]$ is defined in \eqref{dotc_3} we get
\begin{eqnarray}\label{estimate_c}
|\dot{c}[r]| \leq C \|r\|_\infty^2,& |\dot{c}[r_1]-\dot{c}[r_2]| \leq C \| r_1-r_2\|_\infty (\|r_1\|_\infty+ \|r_2\|_\infty) .
\end{eqnarray}
Since $A_1[r]$, $A_2[r]$ are defined in \eqref{A_1,2} using the estimates of $\mathcal{U}[r]$ and $\dot{c}$ we obtain 
\begin{eqnarray}
\|A_1[r]\|_{1,\infty} &\leq &C \frac{1}{|r|_*}\left(1+\|r\|_{1,\infty}\left(1+\frac{1}{|r|_*} \right) \right)(\|\mathcal{U}[r]\|_{1,\infty}+  \|r\|_\infty^2 ),\label{estimate_1}\\
\|A_2[r]\|_{1,\infty}&\leq& C\Big( \|\mathcal{U}[r]\|_{1,\infty}+  \|r\|_\infty^2\Big), \label{estimate_2}\\
\| A_1[r_1]-A_1[r_2] \|_\infty & \leq& K\left(\frac{1}{|r_1|_*},\frac{1}{|r_2|_*}, \|r_1\|_\infty,  \|r_2\|_\infty \right) \|r_1-r_2\|_\infty \label{estimate_3}
\end{eqnarray}
Now, given $r$, we introduce  $\Theta[r]$ the characteristic flow of the transport equation \eqref{equation_r_transport}
\begin{equation*}
\left\{
\begin{array}{rcl}\dot{\Theta}[r](t,s,\theta)&=&A_1[r](t,\Theta[r](t,s,\theta)),\\
\Theta[r](t,t,\theta)&=&\theta.
\end{array}
\right.
\end{equation*}
Thanks to the regularity of $A_1[r]$ the characteristic flow is well defined and in particular the characteristic curves do not intersect and satisfy $$\Theta[r](t,s,\cdot) \circ \Theta[r](s,t,\cdot)= id .$$
In particular since $A_1[r](0)=A_1[r](\pi)=0$  we have $\Theta[r](t,s,0)=0$ and $\Theta[r](t,s,\pi)=\pi$ for all $t,s$.
Thanks to this properties, for a given $r$ the unique solution of the transport equation 
\begin{equation}\label{eqt_point_fixe}
\left\{
\begin{array}{c}
 \partial_t \tilde{r} +\partial_\theta \tilde{r} A_1[r] =A_2[r],\\
 \tilde{r}(0,\cdot)=r_0,
 \end{array}
 \right.
\end{equation}
satisfies
$$
\frac{d}{dt} \tilde{r}(t,\Theta[r](t,0,\theta))=A_2[r](t,\Theta[r](t,0,\theta)),
$$
since the characteristic curves are well defined and do not intersect we have 
\begin{equation}\label{tilde_r}
\tilde r(t,\theta)=r_0(\Theta[r] (0,t,\theta))+\int_0^t A_2[r](s,\Theta[r] (s,t,\theta))ds.
\end{equation}
Hence we define the mapping $\mathcal{L} : \mathcal{C}(0,T;\mathcal{C}^{0,1}(0,\pi)) \to  \mathcal{C}(0,T;\mathcal{C}^{0,1}(0,\pi)) $
which associates to each $r$ the solution $\tilde{r}$ of equation \ref{eqt_point_fixe} defined by \eqref{tilde_r}. Thanks to estimates \eqref{estimate_1}, \eqref{estimate_2} and \eqref{estimate_3} the operator $L$ satisfies for all $r,r_1,r_2 $ such that $\|r\|_{1,\infty}\leq \|r_0\|_{1,\infty} \lambda $ and $|r|_*\geq \beta|r_0|_*$ with $\beta <1< \lambda$
\begin{eqnarray*}
\|\mathcal{L}[r](t,\cdot)\|_{1,\infty}& \leq& \|r_0\|_{1,\infty}+ T C(\lambda,\beta, \|r_0\|_{1,\infty},|r_0|_*)\\
 |\mathcal{L}[r](t,\cdot)|_* &>&|r_0|_* - T C(\lambda,\beta,\|r_0\|_{1,\infty},|r_0|_*),\\
\|\mathcal{L}[r_1](t,\cdot)- \mathcal{L}[r_2](t,\cdot) \|_\infty &\leq&  C(\lambda,\beta,\|r_0\|_{1,\infty},|r_0|_*) T \|r_1(t,\cdot)-r_2(t,\cdot)\|_\infty.
\end{eqnarray*}
If we define the sequence $(r^n)_{n \in \mathbb{N}}$ such that $r^0=r_0$ and $r^{n+1}=L[r^n]$ \textit{i.e.}
\begin{equation}
\left\{
\begin{array}{c}
 \partial_t {r^{n+1}} +\partial_\theta r^{n+1} A_1[r^{n}] =A_2[r^n]\\
 {r}^{n+1}(0,\cdot)=r_0,
 \end{array}
 \right.
\end{equation}
 previous estimates ensure that, for $T$ small enough, $r^n$ converges (up to a subsequence) to some $\bar{r} \in \mathcal{C}(0,T;\mathcal{C}(0,\pi))$ satisfying  equation \eqref{equation_r_transport}. Moreover, we have $\bar{r}\in \mathcal{C}(0,T;\mathcal{C}^{0,1}(0,\pi))$ and $|\bar{r}|_*>0$.
Uniqueness of the fixed-point is ensured thanks to the former stability estimates. Eventually, we recover the existence and uniqueness of $c[r]$ thanks to \eqref{estimate_c}.
\end{proof}
\subsection{Analysis of the spherical case}\label{section_hyperbolic_sphere}

We are interested now in showing that if $r_0=0$, then the solution of the hyperbolic equation \eqref{equation_r_transport} corresponds also to the Hadamard-Rybczynski solution \textit{i.e.} $$c+\partial \tilde{B}_t=\partial B(v^*t,0),$$
 with  $
\partial  \tilde{B}_t=\left \{r(t,\theta) e(\theta,\phi), (\theta,\phi)\in[0,\pi]\times[0,2\pi] \right \}.
$ 
First, in the case where the reference point $c$ corresponds to the center $
c^*(t):=v^*t$
given by Hadamard-Rybczynski, the result is straightforward since the source term $A_2[r]$ of the hyperbolic equations becomes according to formula \eqref{A_1,2}
$$
A_2[r](\theta)= (\mathcal{U}[r]-\dot{c}^*)\cdot e(\theta,0)= (\mathcal{U}[r](\theta)-v^*)\cdot e(\theta,0),
$$
which vanishes for $r=1$ since $\theta \mapsto \mathcal{U}[1](\theta)$ corresponds to the velocity $\theta \mapsto u_0(e(\cdot,0))$ introduced in Lemma \eqref{lemme_hadamard}. This shows that $r=1$ is a solution to the hyperbolic equation in the case $c=c^*=v^*t$.\\
In the general case $\dot{c}\neq v^*$, by symmetry, it is enough to show that 
\begin{equation*}
|c(t)+ r(t,\theta)e(\theta,0)-c^* |^2=1 \textit{ for all }\theta\in[0,\pi] \textit{ and }t.
\end{equation*}
Equivalently, we consider the function $\bar{r}$ satisfying the above formula and show that it satisfies the hyperbolic equation. This is shown in the following Proposition.
\begin{prop}\label{prop_alkashi}
Let $r_0=1$ and $(r,c)$  the solution of \eqref{equation_r_transport} with $\dot{c}\neq v^*$. Denote by $T>0$ the maximal time of existence of the solution such that $|c-c^*| \leq 1 $ with $c^*=v^*t=-\frac{4}{15}e_3 t $. Then $r$ is given by
$$
{r}(t,\theta)=-(c-c^*)_3\cos(\theta) +\sqrt{1-(c-c^*)_3^2\sin^2(\theta)},  \: (t,\theta) \in [0,T]\times[0,\pi]
$$
and satisfies
\begin{equation*}
|c(t)+ r(t,\theta)e(\theta,0)-v^*t |^2=1 \textit{ for all }\theta\in[0,\pi] \textit{ and }t \leq T.
\end{equation*}
In other words  $$\partial B_t:=c+\partial \tilde{B}_t=\partial B(c^*,1) \text{ on }[0,T].$$

\end{prop}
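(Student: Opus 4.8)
The plan is to exhibit the explicit candidate $\bar r$ dictated by the geometric constraint that the parametrized surface coincide with the translated Hadamard--Rybczynski sphere, to check that this candidate solves the hyperbolic equation \eqref{equation_r_transport} with initial datum $r_0=1$, and then to invoke the uniqueness part of Theorem \ref{thm2} to conclude $r=\bar r$. Writing $d_3:=(c-c^*)_3$ and imposing $|c+\bar r(t,\theta)e(\theta,0)-c^*|^2=1$ leads, after expanding with $e(\theta,0)=(\sin\theta,0,\cos\theta)$, to the quadratic $\bar r^2+2d_3\cos\theta\,\bar r+(d_3^2-1)=0$ (the law of cosines, whence the name of the Proposition). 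Taking the root
$$
\bar r(t,\theta)=-d_3\cos\theta+\sqrt{1-d_3^2\sin^2\theta}
$$
gives the asserted formula; I would record that $1-d_3^2\sin^2\theta\geq d_3^2\cos^2\theta$ whenever $|d_3|\leq1$, so that $\bar r\geq0$ on the whole interval of validity $|c-c^*|\leq1$, and that $d_3(0)=0$ forces $\bar r(0,\cdot)=1=r_0$.

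Second, I would rewrite the hyperbolic equation in a form adapted to the sphere. Using that $e:=e(\theta,0)$ and $\partial_\theta e$ are orthonormal and recalling the surface element \eqref{surface_element}, equation \eqref{eqt} is equivalent, after multiplication by $\bar r^2\sin\theta$, to
$$
\bar r^2\sin\theta\,\partial_t\bar r=\big(\mathcal{U}[\bar r](\theta)-\dot c\big)\cdot s(\theta,0),
$$
since $\bar r^2\sin\theta\,e-\bar r\,\partial_\theta\bar r\,\sin\theta\,\partial_\theta e=s(\theta,0)$. This reduces the verification to controlling the single scalar quantity $(\mathcal{U}[\bar r]-\dot c)\cdot s$.

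Third --- and this is the heart of the argument --- I would bring in the Hadamard--Rybczynski property. Because $\bar r$ parametrizes $\partial B(c^*,1)$, the field $\mathcal{U}[\bar r]$ is exactly the Stokes velocity generated by $1_{B(c^*,1)}$, which by translation invariance equals $u_0(\cdot-c^*)$ with $u_0$ as in Lemma \ref{lemme_hadamard}; evaluating it at the surface point $y=c+\bar r e\in\partial B(c^*,1)$ and noting that $s(\theta,0)$ is normal to that sphere (indeed $(y-c^*)\cdot\partial_\theta\tilde y=\tfrac12\partial_\theta|y-c^*|^2=0$), hence parallel to the unit normal $y-c^*$, Lemma \ref{lemme_hadamard} gives $(\mathcal{U}[\bar r]-v^*)\cdot s=0$. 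Consequently the right-hand side collapses to $(v^*-\dot c)\cdot s=-\dot d_3\,(e_3\cdot s)$, and a short computation of $e_3\cdot s=\sin\theta(\bar r^2\cos\theta+\bar r\,\partial_\theta\bar r\,\sin\theta)$ turns the equation to verify into
$$
\tfrac12\partial_t(\bar r^2)=-\dot d_3\,\big(\bar r\cos\theta+\partial_\theta\bar r\,\sin\theta\big).
$$

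Finally I would confirm this last identity by implicit differentiation of the constraint $F(\bar r,\theta,d_3)=\bar r^2+2d_3\cos\theta\,\bar r+d_3^2-1=0$: from $\partial_\theta\bar r=d_3\bar r\sin\theta/(\bar r+d_3\cos\theta)$ and $\partial_t\bar r=-\dot d_3(\bar r\cos\theta+d_3)/(\bar r+d_3\cos\theta)$ both sides reduce to $-\dot d_3\,\bar r(\bar r\cos\theta+d_3)/(\bar r+d_3\cos\theta)$, so $\bar r$ solves \eqref{equation_r_transport}; uniqueness from Theorem \ref{thm2} then yields $r=\bar r$ and the identity $|c+re(\theta,0)-c^*|^2=1$, i.e. $\partial B_t=\partial B(c^*,1)$. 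The only genuinely delicate point is the third step: one must check that inserting the candidate $\bar r$ into the nonlinear, nonlocal operator $\mathcal{U}$ really reproduces the translated Hadamard--Rybczynski field, so that no circularity is hidden in the phrase \emph{the velocity depends on the domain} --- and this is precisely what translation invariance of the Stokes problem combined with Lemma \ref{lemme_hadamard} supplies; the remaining computations are routine.
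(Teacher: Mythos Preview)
Your argument is correct and follows the same overall strategy as the paper: derive $\bar r$ from the law of cosines, verify it solves \eqref{equation_r_transport}, and conclude by uniqueness. The execution differs in two places. First, you multiply \eqref{eqt} by $\bar r^2\sin\theta$ to obtain the surface-element form $\bar r^2\sin\theta\,\partial_t\bar r=(\mathcal{U}[\bar r]-\dot c)\cdot s$, whereas the paper multiplies by $\bar r+(c-c^*)_3\cos\theta$ and arrives at the equivalent scalar $(\dot c^*-\mathcal{U}[\bar r])\cdot(\bar r\,e+c-c^*)$; since $s$ is proportional to $\bar r\,e+c-c^*=y-c^*$ (the outward normal to $\partial B(c^*,1)$), the two reductions coincide. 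Second, to identify $\mathcal{U}[\bar r]$ with the translated Hadamard--Rybczynski field you invoke translation invariance of the Stokes problem together with the observation that the domain parametrized by $\bar r$ is exactly $B(c^*,1)$; the paper instead establishes this via an explicit change of variable $\theta\mapsto\gamma$ defined by $c+\bar r(\theta)e(\theta,0)-c^*=e(\gamma,0)$, proves this map is a bijection of $[0,\pi]$, and computes the Jacobian to show $\mathcal{U}[\bar r](\theta)=\mathcal{U}[1](\gamma)$ directly at the level of the boundary integral \eqref{vitesse}. Your route is shorter and more conceptual; the paper's is more hands-on but makes the surjectivity of the spherical parametrization (needed so that the domain $c+\tilde B_t$ really equals the full ball $B(c^*,1)$, not merely has its boundary contained in $\partial B(c^*,1)$) fully explicit rather than implicit.
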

\begin{proof}
First, note that $|c(t)+ {r}(t,\theta)e(\theta,0)-c^* |^2=1$ corresponds to
\begin{equation}\label{eq_deg_2}
r^2+2(c-c^*)_3r\cos(\theta)+(c-c^*)_3^2-1= 0,
\end{equation}
Computing the solutions of the quadratic equation \eqref{eq_deg_2} we denote by $\bar{r}$ the solution which satisfies $\bar{r}(0,\cdot)=1$ given by
$$
\bar{r}(t,\theta)=-(c-c^*)_3\cos(\theta) +\sqrt{1-(c-c^*)_3^2\sin^2(\theta)},
$$
which is well defined provided that $|c-c^*|\leq 1 $. We aim to prove that $\bar{r}$ satisfies the hyperbolic equation \eqref{equation_r_transport}. We have 
\begin{eqnarray*}
\partial_t \bar{r}(t,\theta)= - (\dot{c}-\dot{c}^*)\frac{\bar{r} \cos(\theta) +(c-c^*)_3}{\bar{r}+(c-c^*)\cos(\theta)} ,& \displaystyle{\partial_\theta \bar{r}(t,\theta)= \frac{\bar r(t,\theta) \sin(\theta) (c-c^*)_3}{\bar{r}+(c-c^*)_3\cos(\theta)}}.
\end{eqnarray*}
Direct computations using formula \eqref{A_1,2} yield 
\begin{multline*}
(\partial_t \bar{r} + \partial_\theta \bar{r} A_1[\bar{r}] )(\bar{r}+(c-c^*)_3\cos (\theta))= -(\dot{c}-\dot{c}^*)\bar{r} \cos(\theta) - (\dot{c}-\dot{c}^*)(c-c^*)_3\\+(\mathcal{U}[\bar{r}]-\dot{c})_1 \cos(\theta) \sin(\theta)(c_c^*)_3-(\mathcal{U}[\bar{r}]-\dot{c})_3(c-c^*)_3\sin^2(\theta).
\end{multline*}
\begin{multline*}
 A_2[\bar{r}] (\bar{r}+(c-c^*)_3\cos (\theta))=\bar{r}(\mathcal{U}[\bar{r}]-\dot{c})_1 \sin(\theta)+\\ (\mathcal{U}[\bar{r}]-\dot{c})_1\sin(\theta) \cos(\theta) (c-c^*)_3 + (\mathcal{U}[\bar{r}]-\dot{c})_3 \cos(\theta) \bar{r}+ (\mathcal{U}[\bar{r}]-\dot{c})_3 \cos(\theta)^2(c-c^*)_3.
\end{multline*}
Taking the difference between the two above formulas we obtain
$$
(\bar{r}+(c-c^*)_3\cos (\theta))(\partial_t \bar{r} + \partial_\theta \bar{r} A_1[\bar{r}] -A_2[\bar{r}])=(\dot{c}^*-\mathcal{U}[\bar{r}])\cdot(\bar{r} e(\theta,0) + c-c^*).
$$
It remains to proof that the right hand side in the above formula is equal to zero. Indeed the term  $\bar{r}+(c-c^*)_3\cos (\theta)=\sqrt{1-(c-c^*)^2\sin^2(\theta)} $ in the above left hand side cannot be identically null for all $t$ and $\theta\in[0,\pi]$ since we are in the case $\dot{c} \neq v^*$.
We recall the formula of $\mathcal{U}[{r}]$ given in \eqref{vitesse}
\begin{align*}
\mathcal{U}[{r}](t,\theta)&= -\frac{1}{8\pi}\int_{[0,\pi]\times[0,2\pi]} \Bigg(\frac{\left (r( \theta)e( \theta,0)- r( \bar \theta)e( \bar \theta,\bar \phi)\right)\cdot e_3}{\left |r( \theta)e( \theta,0)- r(\bar  \theta)e(\bar  \theta,\bar \phi)\right|} s[r](\bar \theta,\bar \phi)\\
& - \frac{\left(r(t, \theta)e( \theta,0)- r(\bar  \theta)e(\bar  \theta,\bar \phi)\right )\cdot s[r](\bar \theta,\bar \phi)}{\left | r(\theta)e( \theta,0)- r(\bar  \theta)e(\bar  \theta,\bar \phi)\right|}e_3 \Bigg)d \bar \theta d\bar \phi.
\end{align*}

We recall that $\bar{r}$ is such that $|c+\bar{r}(t,\theta)e(\theta,0)-c^*|=1 $. We claim that for all $\theta \in[0,\pi]$ there exists $\gamma \in[0,\pi]$ such that
$$c+\bar{r}(t,\theta)e(\theta,0)-c^*=e(\gamma,0),$$ 
and the mapping $\gamma \mapsto \theta$ is bijective. Indeed, let $\theta \in[0,\pi]$, we search for  $\gamma \in[0,\pi]$ satisfying
$$c+\bar{r}(t,\theta)e(\theta,0)-c^*=e(\gamma,0),$$
which yields 
\begin{eqnarray*}
\cos(\gamma)=(c-c^*_3)+\bar{r}(\theta) \cos(\theta),& \sin(\gamma)= \bar r(\theta) \sin(\theta).
\end{eqnarray*}
Note that $\displaystyle \theta \mapsto  \bar{r}(\theta) \cos(\theta) $ is monotone indeed
$$
\partial_\theta \left[\theta \mapsto \bar{r}(\theta) \cos(\theta) \right] =- \frac{r^2(\theta) \sin(\theta)}{\sqrt{1-\sin^2(\theta)(c-c^*)^2}}\leq 0,
$$
moreover,
 \begin{eqnarray*}
 \left[\theta \mapsto (c-c^*_3)+\bar{r}(\theta) \cos(\theta) \right]_{\theta=0}= 1, &\displaystyle  \left[\theta \mapsto (c-c^*_3)+\bar{r}(\theta) \cos(\theta) \right]_{\theta=\pi}= -1 
 \end{eqnarray*} 
hence we have $\theta \mapsto (c-c^*_3)+\bar{r}(\theta) \cos(\theta) \in[-1,1]$ and bijective. This ensures that  $\gamma \mapsto \theta$ is bijective and in particular we have $\gamma=0$ when $\theta=0$ and $\gamma=\pi$ when $\theta=\pi$, see Figure \ref{fig} for an illustration.\\
Consequently, we introduce the change of variable $c+\bar{r}(\bar \theta)e(\bar \theta,\bar \phi) -c^* = e(\bar \gamma,\bar \phi):=\omega \in \partial B(0,1)$ and we set $x'=c+\bar{r}(\theta)e(\theta,0) -c^*=e(\gamma,0)\in \partial B(0,1)$.
Direct computations yield
\begin{eqnarray*}
\gamma&=& \arccos((c-c^*)_3+\bar r(\theta) \cos(\theta))\\
d \gamma &=& \frac{\bar r( \theta)}{\bar r( \theta) +(c-c^*)_3\cos(\theta)} d  \theta
\end{eqnarray*}
\begin{align*}
s[\bar r](\theta,\phi)d\theta  &= \frac{\bar r( \theta) +(c-c^*)_3\cos(\theta)}{\bar r( \theta)} s[\bar r](\theta,\phi)d\gamma \\
&= \frac{\bar r( \theta) +(c-c^*)_3\cos(\theta)}{\bar r( \theta)}\bar r(\theta) \sin(\theta) (\bar r e(\theta,\phi) - \partial_\theta \bar r \, \partial_\theta e(\theta, \phi) )d\gamma\\
&= \bar r \sin(\theta)\begin{pmatrix}\bar r \sin(\theta)\cos(\phi) \\ 
\bar r \sin(\theta)\sin(\phi)\\
 \bar r \cos(\theta) +(c-c^*)\end{pmatrix} d \gamma \\
&= \sin(\gamma) e(\gamma,\phi) d \gamma \\
&= s[1](\gamma, \phi) d \gamma
\end{align*}
where we used the fact that $\sin(\gamma) = \bar r(\theta) \sin(\theta) $ and $ \cos(\gamma)= \bar r(\theta) \cos(\theta) +(c-c^*)$. We get eventually
\begin{align*}
\mathcal{U}[\bar{r}](t,\theta)&= -\frac{1}{8\pi}\int_{\partial B(0,1)} \Bigg(\frac{ (e(\gamma,0)-\omega )\cdot e_3}{\left |e(\gamma,0)-\omega\right|} n(\omega)  - \frac{(e(\gamma,0)-\omega)\cdot n(\omega)}{ |e(\gamma,0)-\omega|}e_3 \Bigg) d \sigma(\omega) \\
&= \mathcal{U}[{1}](\gamma),
\end{align*}
using lemma \ref{lemme_hadamard} and the fact that $\mathcal{U}[{1}](\cdot) $ corresponds to $u_0(e(\cdot,0))$ defined in Lemma \ref{lemme_hadamard} we have $\mathcal{U}[{1}](\gamma)\cdot e(\gamma,0)=v^*\cdot e(\gamma,0) $ which yields using the fact that $c+r(\theta)e(\theta,0) -c^*=e(\gamma,0)$
$$
\mathcal{U}[\bar{r}](t,\theta)\cdot (c+r(\theta)e(\theta,0) -c^*)=v^* \cdot (c+r(\theta)e(\theta,0) -c^*),
$$
which concludes the proof.
\end{proof}

\begin{figure}
\begin{center}
\definecolor{qqwuqq}{rgb}{0,0.39215686274509803,0}
\definecolor{qqqqff}{rgb}{0,0,1}
\begin{tikzpicture}[scale=0.8,line cap=round,line join=round,>=triangle 45,x=1cm,y=1cm]
\draw [shift={(0,1)},line width=2pt,color=qqwuqq,fill=qqwuqq,fill opacity=0.10000000149011612] (0,0) -- (27.860727762243858:0.6) arc (27.860727762243858:90:0.6) -- cycle;
\draw [shift={(0,0)},line width=2pt,color=qqwuqq,fill=qqwuqq,fill opacity=0.10000000149011612](0,0) -- (45:0.6) arc (45:90:0.6) -- cycle;
\draw [shift={(0,0)},line width=2pt]  plot[domain=-1.5707963267948966:1.5707963267948966,variable=\t]({1*3*cos(\t r)+0*3*sin(\t r)},{0*3*cos(\t r)+1*3*sin(\t r)});
\draw [line width=2pt] (0,1)-- (2.121320343559643,2.121320343559642);
\draw [line width=2pt] (0,0)-- (2.121320343559643,2.121320343559642);
\draw [line width=2pt] (0,3)-- (0,-3);
\draw (-0.6,0.44) node[anchor=north west,color=qqqqff] {$c^*$};
\draw (-0.6,1.32) node[anchor=north west, color=qqqqff] {$c$};
\draw (0.78,2.52) node[anchor=north west] {$r(\theta)$};
\draw [color=qqwuqq](0.14,2.46) node[anchor=north west] {$\theta$};
\draw [color=qqwuqq](0.17,1.) node[anchor=north west] {$\gamma$};
\draw (1.1,1.) node[anchor=north west] {1};
\draw [color=qqqqff](2.3,2.98) node[anchor=north west] {$c+r(\theta)e(\theta,0)$};
\draw [fill=qqqqff] (0,0) circle (2pt);
\draw [fill=qqqqff] (0,1) circle (2.5pt);
\draw [fill=qqqqff] (2.121320343559643,2.121320343559642) circle (2.5pt);
\end{tikzpicture}
\caption{Illustration of the bijective application  $[0,\pi]$: $\theta \mapsto \gamma$}\label{fig}
\end{center}
\end{figure}
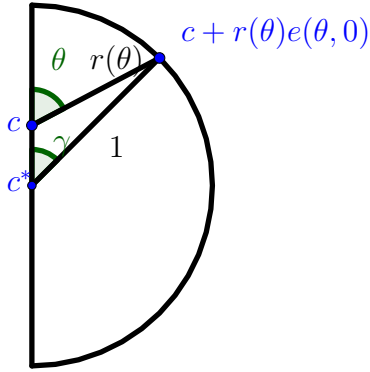

Proposition \ref{prop_alkashi} suggets that the existence time of the solution depends on the choice of $\dot{c}$. We complete the analysis by showing that the choice for which $c$ is transported along the flow \textit{i.e.} $\dot{c}$ is given by \eqref{dotc_3} is such that $|c-c^*|\leq 1$ for all time. 
\begin{prop}\label{prop_cc*}
Let $r_0=1$ and $(r,c)$  the solution of \eqref{equation_r_transport} and \eqref{dotc_3}. Then for all time $t\geq 0$ we have $c(t)\leq c^*(t)$, $|c(t)-c^*(t)|\leq 1$ and 
$$
\underset{t\to \infty}{\lim}c(t)-c^*(t)=- 1
$$

\end{prop}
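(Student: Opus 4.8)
The plan is to collapse the coupled system into a single autonomous scalar ODE for the signed gap
\begin{equation*}
d(t):=(c(t)-c^*(t))_3 = c_3(t)+\tfrac{4}{15}\,t,
\end{equation*}
and then read off the three assertions from that ODE. First I would note that $d(0)=0\in(-1,1)$, so by continuity there is a maximal interval $[0,T)$ on which $|c-c^*|\le 1$. On this interval Proposition~\ref{prop_alkashi} applies and furnishes the explicit radius
\begin{equation*}
r(t,\theta)=-d(t)\cos\theta+\sqrt{1-d(t)^2\sin^2\theta},
\end{equation*}
whence $r^2=1+d^2\cos(2\theta)-2\,d\cos\theta\sqrt{1-d^2\sin^2\theta}$. (The hypothesis $\dot c\neq v^*$ needed for Proposition~\ref{prop_alkashi} will be checked a posteriori to hold whenever $|d|<1$.)

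Next I would substitute this expression into the law \eqref{dotc_3} for $\dot c_3$. The integrand splits into a polynomial part and a part carrying the square root $\cos\theta\sqrt{1-d^2\sin^2\theta}$. The decisive simplification is that this second part is \emph{odd} under $\theta\mapsto\pi-\theta$: only $\cos\theta$ changes sign, whereas $\sin\theta$, $\sqrt{1-d^2\sin^2\theta}$ and the weight $\sin\theta(1-\tfrac{1}{2}\sin^2\theta)$ are all even, so its contribution to the integral over $[0,\pi]$ vanishes. The remaining elementary integrals $\int_0^\pi\sin\theta(1-\tfrac{1}{2}\sin^2\theta)\,d\theta=\tfrac{4}{3}$ and $\int_0^\pi\cos(2\theta)\sin\theta(1-\tfrac{1}{2}\sin^2\theta)\,d\theta=-\tfrac{4}{15}$ then give $\dot c_3=-\tfrac{1}{4}\big(\tfrac{4}{3}-\tfrac{4}{15}d^2\big)=-\tfrac{1}{3}+\tfrac{1}{15}d^2$. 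Since $\dot c^{*}_3=-\tfrac{4}{15}$ by \eqref{formule_v^*}, subtracting yields the closed equation
\begin{equation*}
\dot d=\tfrac{1}{15}\big(d^2-1\big),\qquad d(0)=0.
\end{equation*}
In particular $\dot c=v^*$ would force $d^2=1$, which confirms $\dot c\neq v^*$ throughout $(-1,1)$ as required above.

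I would then conclude from the ODE alone. Its equilibria are $d=\pm1$, and on $(-1,1)$ one has $\dot d<0$; by uniqueness the trajectory issued from $d(0)=0$ can cross neither $-1$ nor $+1$, so $d$ remains in $(-1,0]$ for all $t\ge0$, i.e. $c(t)\le c^*(t)$ and $|c(t)-c^*(t)|\le1$. Being monotone decreasing and bounded below by $-1$, $d$ converges to a limit which, as a zero of $\tfrac{1}{15}(d^2-1)$ in $[-1,0]$, must equal $-1$; alternatively one integrates explicitly to $d(t)=-\tanh(t/15)$, which renders all three claims transparent. Finally, since $|d|<1$ strictly at every finite time, the maximal interval of Proposition~\ref{prop_alkashi} is in fact $T=+\infty$, so the radius formula — and hence the whole computation — remains valid globally; this is the continuation step that promotes the local solution of Theorem~\ref{thm2} to a global one.

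The main obstacle is twofold. Computationally, it is carrying out the integral in \eqref{dotc_3} without drowning in the square root, which is resolved by the parity observation that annihilates the $\cos\theta\sqrt{1-d^2\sin^2\theta}$ term. Logically, it is the bootstrap: Proposition~\ref{prop_alkashi} is only available under the a priori bound $|c-c^*|\le1$ that we are simultaneously trying to prove, so one must run the argument on the maximal interval $[0,T)$ and use the resulting ODE to show $T=+\infty$. Once the scalar equation $\dot d=\tfrac{1}{15}(d^2-1)$ is in hand both difficulties dissolve, the dynamics being a simple autonomous flow toward the stable equilibrium $d=-1$.
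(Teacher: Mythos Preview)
Your proposal is correct and follows essentially the same route as the paper: substitute the explicit radius from Proposition~\ref{prop_alkashi} into \eqref{dotc_3}, kill the square-root contribution by the parity $\theta\mapsto\pi-\theta$, and reduce to the autonomous ODE $\dot d=\tfrac{1}{15}(d^2-1)$, whose explicit solution $d(t)=-\tanh(t/15)=\frac{1-e^{2t/15}}{1+e^{2t/15}}$ yields all three claims. Your presentation is in fact slightly more careful than the paper's about the bootstrap (working on the maximal interval where $|c-c^*|\le1$ and then concluding $T=+\infty$), but the substance is the same.
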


\begin{proof}
We recall the formula for $r$ given by Proposition \ref{prop_alkashi}
$$
\bar{r}(t,\theta)=-(c-c^*)_3\cos(\theta) +\sqrt{1-(c-c^*)_3^2\sin^2(\theta)},
$$
and we have 
$$
r^2=1-(c-c^*)_3^2-2(c-c^*)_3r\cos(\theta).
$$
This yields 
\begin{align*}
\dot{c}_3-\dot{c}^*_3&= -\frac{1}{4} \int_0^\pi r^2(t,\theta) \sin(\theta) \left(1-\frac{1}{2}\sin^2(\theta) \right) d\theta - v^*_3\\
&=-v^*_3 -\frac{1}{4} \left(1-(c-c^*)_3^2 \right) \int_0^\pi \sin(\theta) \left(1-\frac{1}{2}\sin^2(\theta) \right) d\theta\\
& +\frac{1}{2}(c-c^*)_3\Big( -(c-c^*)_3  \int_0^\pi \cos^2(\theta) \sin(\theta) \left(1-\frac{1}{2}\sin^2(\theta) \right) d\theta \\
&+\int_0^\pi \cos (\theta) \sin(\theta) \sqrt{1-(c-c^*)_3^2\sin^2(\theta)}d\theta \Big)\\
&= -v^*_3 -\frac{1}{4} \left(1-(c-c^*)_3^2 \right) \frac{4}{3}-\frac{1}{2}(c-c^*)_3^2 \frac{8}{15}
\end{align*}
where we used the fact that the last integral vanishes using the change of variable $\theta'=\pi-\theta$. We get
$$
\dot{c}_3-\dot{c}^*_3=-\frac{1}{15}+ \frac{1}{15}(c-c^*)_3^2.
$$ 
solving the ODE $\dot{x}=-\frac{1}{15}+\frac{1}{15}x^2$ with $x(0)=0$ we obtain 
$$
{c}(t)-{c}^*_3(t)= \frac{1-e^{\frac{2t}{15}}}{e^{\frac{2t}{15}}+1},
$$

this shows that $c\leq c^*$, $|c-c^*|\leq 1$ for all time and in particular $c-c^* \to -1$ when $t\to \infty$.
\end{proof}

\subsection{Numerical simulations}
We present in this section numerical simulations in the spherical case \textit{i.e.} $r_0=1$.\\
In what follows we set $T>0$, we consider $N,M,L \in \mathbb{N}^*$  and define 
$$(\Delta t,\Delta \theta,\Delta \phi)=\left (\frac{T}{N},\frac{\pi}{M}, \frac{2 \pi}{L} \right ), $$
we set for $i=0,\cdots,M$, $j=0,\cdots, L$, $n=0,\cdots,N$
\begin{eqnarray*}
\theta_i=\Delta\theta \,i ,&\phi_j= \Delta\phi\, j,& t^n=\Delta t n.
\end{eqnarray*}
$(\theta_i)_{1 \leq i\leq M}$ is a subdivision of $[0,\pi]$, $(t^n)_{1 \leq n \leq N}$ a subdivision of $[0,T]$ and $(\phi)_{1 \leq j \leq L}$ a subdivision of $[0,2 \pi]$.
 We discretise the radius and the center by setting
$$
r(t,\theta)\sim (r_i^n)_{ 1 \leq i \leq M}^{1 \leq n \leq N}\,, r_i^n=r(t^n,\theta_i)\,, c(t) \sim (c^n)_{1 \leq n \leq N}.
$$
We use the following classical upwind finite difference scheme for the hyperbolic equation. Given $(r_i^n)_{1 \leq i \leq N}$ we define $(r_i^{n+1})_{1 \leq i \leq N}$ as 
\begin{equation}\label{scheme1}
r_i^{n+1}=r_i^n- \frac{\Delta t}{\Delta \theta} A_1^{i,n} \left\{ 
\begin{array}{rcl}
r_i^n-r_{i-1}^n &if& A_1^{i,n}\geq 0,\\
r_{i+1}^n-r_{i}^n &if& A_1^{i,n}\leq 0,
\end{array}\right.
+\Delta t A_2^{i,n}
\,, i=2,\cdots,M-1\,,
\end{equation}
where $A_1^{i,n}=A_1[r^n](t^n,\theta_i)$, $A_2^{i,n}=A_2[r^n](t^n,\theta_i)$ are computed by discretizing the integrals.
For $i=1, M$ we note that $A_1[r](t,0)=A_1[r](t,\pi)=0$ for all function $r$ and $t\geq 0$, hence we set
\begin{equation}\label{formule_bord}
r_1^{n+1}=r_1^n+\Delta t A_2^{1,n}, \:r_M^{n+1}=r_M^n+\Delta t A_2^{M,n}.
\end{equation}
For a fixed time $T>0$, the following conditions ensure a uniform bound of $\underset{1 \leq n \leq N}{\max}\underset{1 \leq i \leq M}{\max}|r_i^n| $
$$
\underset{1\leq k \leq N}{\max} \underset{1\leq i \leq M}{\max}|A_1^{i,k}| \frac{\Delta t}{\Delta \theta} <1, \: \underset{1\leq k \leq N}{\max} \underset{1\leq i \leq M}{\max}|A_2^{i,k}|\leq C .
$$
For the evolution of the center we set $c \sim (c^n)^{1 \leq n \leq N}$  with $c^0=0$.
We distinguish three test cases according the choice of the velocity of the center $c$.
\subsubsection{First test case}
The first test case corresponds to the case where $\dot{c}$ is given by \eqref{dotc_3}.
We set $(\Delta t ,M,L )=(10^{-2},100,200)$. Figure \ref{fig1} illustrates the droplet evolution on the time interval $[0,24]$ using the upwind finite difference scheme \eqref{scheme1}. Precisely we present the vertical section of the surface droplet parametrized with $\theta \mapsto (r(\theta) \sin(\theta), r(\theta) \cos(\theta))$, $\theta \in[0,\pi]$.

\begin{figure}[h]
\begin{center}
\includegraphics[scale=0.6]{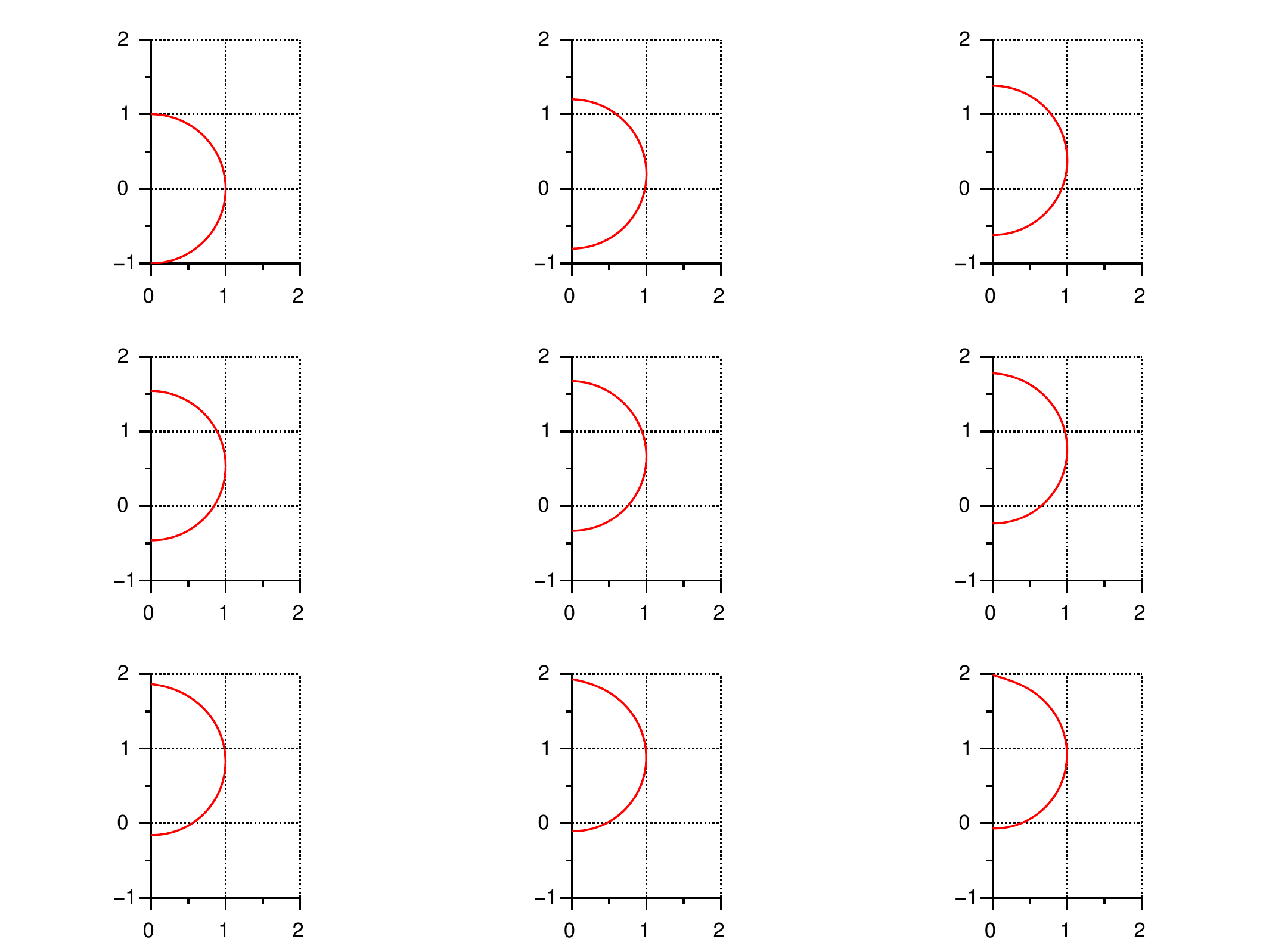}
\end{center}
\caption{First test case. Droplet evolution for $t=0,3, \cdots, 24$}\label{fig1}
\end{figure}
Table \ref{tab1} gathers the following values for each $t=0,2.5,\cdots,25$
\begin{itemize}
\item the distance $|c^n-{c^*}^n|$ between the discretized centers $c^*$ and $c$  
\item The errors $E^n $ defined by
\begin{eqnarray*}
E^n_1= \underset{i}{\max}(|r_i^n-\bar{r}(t^n,\theta_i)|),& \displaystyle E^n_2= \frac{1}{n}\underset{i}{\sum}(|r_i^n-\bar{r}(t^n,\theta_i)|), 
\end{eqnarray*}
where $\bar{r}$ is the exact solution given by Proposition \ref{prop_alkashi}
$$
\bar{r}(t,\theta)=-(c-c^*)_3\cos(\theta) +\sqrt{1-(c-c^*)_3^2\sin^2(\theta)},  \: (t,\theta) \in [0,T]\times[0,\pi].
$$
\item the relative error for the volume conservation $V^n$ defined by discretizing the integral 
\begin{equation*}
\text{Vol}(t):=\frac{2\pi}{3}\int_0^\pi r^3(\theta)\sin(\theta) d \theta= \frac{4 \pi}{3},
\end{equation*}
$$
V^n =\left|{Vol^n-\frac{4 \pi}{3}}\right|{\frac{3}{4 \pi}}.
$$
 \end{itemize}
\begin{table}[!ht]
\addtolength{\tabcolsep}{-1pt}
\begin{center}
\begin{tabular}{|l|c|c|c|c|c|c|c|c|c|c|c|}
\hline
$t$&$0$& $2.5$&$5$&$7.5$&$10$&$12.5$ &$15$ &$17.5$& $20$& $22.5$&$25$ \\
\hline
$|c-c^*|$& $7.10^{-4}$ &    $0.17$ &   $0.32$& $0.46$& $0.58$ &  $0.68$ & $0.76$ &  $0.82$ & $0.87$ &  $0.9$&$0.92$ \\
\hline 
$E_1^n (\times . 10^{-2})$ & $2. 10^{-5}$ &   $0.04$ &    $0.17$ &  $0.40$ &   $0.77$ & $1.29$& $2.02$ & $3.02$ &   $4.37$ & $6.16$&$8.55$ \\
\hline 
$ E_2^n  (\times . 10^{-2}) $& $8. 10^{-6}$ &  $0.01$&  $0.04$& $0.1$ & $0.17$ &  $0.25$ &    $0.36$ &   $0.54$ & $0.85$ & $1.33$&$2.01$ \\
\hline 
 $ V^n (\times . 10^{-3}) $ & $0.08$ & $0.15$ & $0.17$ & $0.16$ &   $0.22$ & $0.48$& $1.07$ & $2.06$ & $3.45$ &  $5.21$&$7.3$\\
 \hline
\end{tabular}
\caption{\label{tab1} First test case. Evolution of $|c-c^*|$, $E_1^n$, $E_2^n$  and  $V^n$ for the upwind finite difference scheme \eqref{scheme1}}
\end{center}
\end{table}
Numerical computations are in agreement with Proposition \ref{prop_alkashi} in the sense that $ r_i^n \sim \bar{r}(t^n,\theta_i)$ \textit{i.e.} the numerical result corresponds to the Hadamard-Rybczynski sphere which can also be noticed on Figure \ref{fig1}.

\begin{table}[!ht]
\addtolength{\tabcolsep}{-1pt}
\begin{center}
\begin{tabular}{|l|c|c|c|c|c|c|c|c|c|c|c|}
\hline
$t$&$0$& $2.5$&$5$&$7.5$&$10$&$12.5$ &$15$ &$17.5$& $20$& $22.5$&$25$ \\
\hline
$|c-c^*|$& $7.10^{-4}$ &    $0.17$ &   $0.32$& $0.46$& $0.58$ &  $0.68$ & $0.75$ &  $0.81$ & $0.84$ &  $0.87$&$0.88$ \\
\hline 
$E_1^n (\times . 10^{-2})$ & $2. 10^{-5}$ &$0.04$&$0.2$& $0.54$&   $ 1.15$&  $2.11$&  $3.5$& $5.44$& $8.04$&  $11.5$&  $ 16$  \\
\hline 
$ E_2^n (\times . 10^{-2}) $& $8. 10^{-6}$ &  $0.02$&    $0.07$&   $0.14$&  $0.27$&  $0.57$&    $1.08$& $1.86$& $2.99$& $4.51$& $6.42$ \\
\hline 
 $ V^n (\times . 10^{-3}) $ & $0.08$&  $0.3$&  $0.98$& $2.59$&   $5.16$&  $ 8.55$&  $12.5$& $16.7$&  $21.1$&  $25.4$&   $29.5$ \\
 \hline
\end{tabular}
\caption{\label{tab1_bis} First test case. Evolution of $|c-c^*|$, $E_1^n$, $E_2^n$  and  $V^n$ for the finite volume scheme \eqref{scheme2}, \eqref{flux} }
\end{center}
\end{table}

We provide in Table \ref{tab1_bis} the results obtained using the following finite volume scheme 
\begin{equation}\label{scheme2}
r_i^{n+1}=r_i^n- \frac{ \Delta t}{ \Delta \theta} \left( \mathcal{F}_{i+\frac{1}{2}}- \mathcal{F}_{i-\frac{1}{2}}\right)
+ \Delta t S^{i,n}
\,, i=1,\cdots,M-1,
\end{equation}
based on the conservative formula 
\begin{equation}\label{conservative_form}
\partial_t r + \partial_\theta (r A_1[r])= A_2[r] + r\partial_\theta A_1[r],
\end{equation}
with
\begin{equation}\label{source_n}
S^{i,n}= A_2^{i,n}+ r_i^n \frac{A_1^{i+1,n}-A_1^{i-1,n}}{2 \Delta \theta}.
\end{equation}
The flux is defined as follows 
\begin{equation}\label{flux}
\mathcal{F}_{i+\frac{1}{2}}= A_{i+\frac{1}{2}}
\left \{
\begin{array}{rcl}
r_i^n & \text{ if } & A_{i+\frac{1}{2}}\geq 0, \\
r_{i+1}^n & \text{ if } & A_{i+\frac{1}{2}}\leq0, \\
\end{array}
\right.\: A_{i+\frac{1}{2}}=\frac{A_1^{i,n}+A_1^{i+1,n}}{2}.
\end{equation}
%
Numerical computations show that $A_{i+1/2}\leq 0$ for all $i=0,\cdots,M$ and all $n=0,\cdots,N$. This means that the finite volume scheme can be rewritten as 
$$
r_i^{n+1}=r_i^n- \frac{\Delta t}{\Delta \theta}  
\frac{A_1^{i,n}+A_1^{i+1,n}}{2} (r_{i+1}^n-r_{i}^n)
+\Delta t A_2^{i,n},
$$
\begin{figure}[h!]
\begin{center}
\includegraphics[scale=0.6]{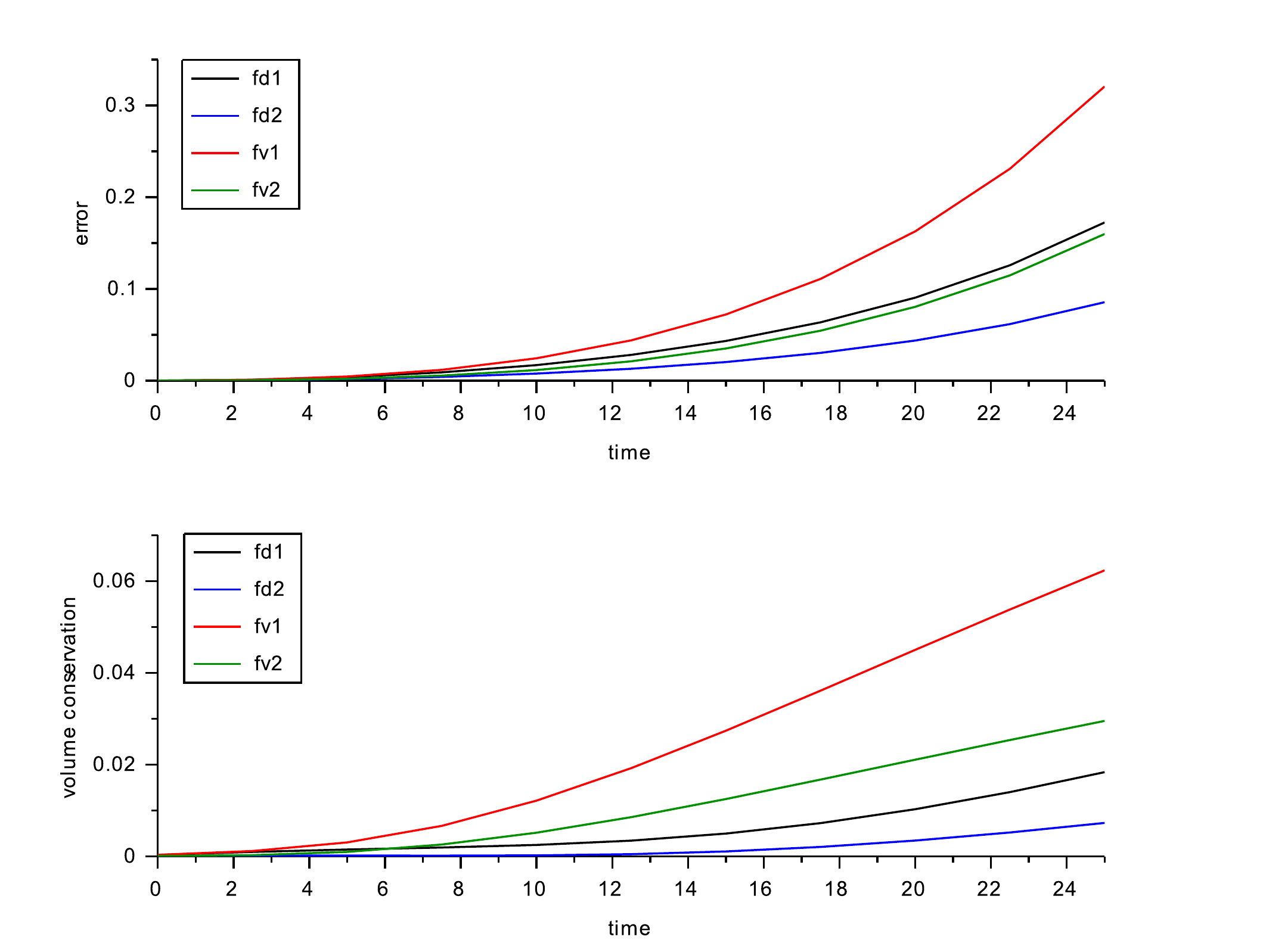}
\end{center}
\caption{Evolution of the error $E_1^n$ and the volume conservation $V^n$ for: (fd1, fd2) the finite difference scheme with ($M=50$, $M=100$) respectively, (fv1, fv2) the finite volume scheme with ($M=50$, $M=100$) respectively }\label{fig2}
\end{figure}

Figure \ref{fig2} represents the error $E_1^n$ and the volume conservation $V_n$ for the two schemes with $M= 50$ and $M=100$ on the same time interval $[0,25]$ with $(\Delta t, L)=(0.01,200)$. According to this comparison we consider only the upwind finite difference scheme for the two remaining test cases.

\subsubsection{Second test case}
The second test case is chosen such that $\dot{c}=\lambda \dot{c}^*$ with $\lambda>1$. We have
$$
|c(t)-c^*(t)|=t (\lambda-1)|v^* |= t (\lambda-1)\frac{4}{15},
$$
if we set for instance $ \lambda=\frac{17}{2}$, the time $\bar t$ for which we have $|c(\bar t)-c^*(\bar t)|=1$ is $\bar t =0.5$. We present in Table \ref{tab3} the errors computed thanks to the upwind finite difference scheme $(\Delta t, M,L)=(0.01,100,200)$.
\begin{table}[!ht]
\addtolength{\tabcolsep}{-1pt}
\begin{center}
\begin{tabular}{|l|c|c|c|c|c|c|c|c|c|c|}
\hline
$t$&$0$&   $0.1$& $0.2$&  $0.3$& $0.35$& $0.4$&  $0.45$&  $0.49$&   $0.5$& $0.51$  \\
\hline
$|c-c^*|$& $0.02$ &     $0.22$&   $0.42$&     $0.62$& $0.72$& $0.82$& $0.92$& $1.00$&   $1.02$& $1.04$\\
\hline
 $E_1^n(\times . 10^{-2})$& $ 0.02$&  $0.22$&   $0.48$&  $0.83$&  $1.08$& $ 1.4$& $1.86$& $2.51$&  $2.82$&  $64.3$  \\
\hline 
$ E_2^n(\times . 10^{-2}) $&$0.01$&  $0.11$& $0.23$&  $0.38$& $0.48$& $0.59$&  $0.69$& $0.72$& $0.92$& $2.53$ \\
\hline 
$\underset{i}{\min} \,r_i^n$&$0.98$&   $0.78$&  $0.58$&   $0.38$& $0.28$& $0.181$& $0.08$& $9.10^{-4}$& $-0.14$&$-1.34$ \\
\hline 
$V^n (\times . 10^{-2})$&  $ 0.03$&  $ 0.436$&  $ 0.87$&  $1.38$& $1.68$&  $2.02$& $2.42$&  $2.84$&  - & - \\
\hline
\end{tabular}
\caption{\label{tab3} Second test case. Evolution of  $E_1^n$, $E_2^n$, $\underset{i}{\min}\, r_i^n$  and  $V^n$}
\end{center}
\end{table}
In this case, numerical computations show that after $t=0.5$ we obtain negative values for the radius. This suggests that the maximal time of existence of the solution depends on the choice of $\dot{c}$.
\subsubsection{third test case}\label{steady}
We investigate the case where $c=c^*$ using the upwind finite difference scheme \eqref{scheme1}. In this case we recall that $\bar r=1$ is a steady solution to the hyperbolic equation. We present in Table \ref{tab2} the values of $E_1^n, E_2^n, V^n$.
\begin{table}[!ht]
\addtolength{\tabcolsep}{-1pt}
\begin{center}
\begin{tabular}{|l|c|c|c|c|c|c|c|c|c|c|c|}
\hline
$t$&$0$& $2.5$&$5$&$7.5$&$10$&$12.5$ &$15$ &$17.5$& $20$& $22.5$&$25$ \\
\hline 
$E_1^n(\times . 10^{-3})$& $4. 10^{-4}$&    $0.1$&    $0.22$&   $0.43$&   $0.78$& $1.24$&    $1.82$&   $2.53$&   $ 3.38$&    $4.41$&$5.65$\\
\hline 
$ E_2^n(\times . 10^{-3}) $& $2 . 10^{-4}$&   $ 0.06 $ &    $0.16$&   $0.3$&$0.49$&  $0.73$&    $1.02$&   $ 1.36$&    $1.76$&  $2.22$&$2.73$  \\
\hline 
 $ V^n(\times . 10^{-3}) $ & $0.08$&   $0.28$&  $0.47$&  $0.67$& $0.86$&  $1.06$&   $1.25$&  $1.44$&  $1.63$&   $1.82$&$2.02$\\
 \hline
\end{tabular}
\caption{\label{tab2} Third test case. Evolution of  $E_1^n$, $E_2^n$  and  $V^n$}
\end{center}
\end{table}

\subsubsection{Discussion on the approximation scheme}
In this last part we discuss the main difficulties encountered regarding the numerical solving of the hyperbolic equation. Several schemes have been tested in addition of the upwind finite difference scheme \eqref{scheme1} and the finite volume scheme \eqref{scheme2},\eqref{flux}. First, a Lax-Friedrichs scheme for the conservative formulation \eqref{conservative_form},   \eqref{scheme2}, \eqref{source_n} defined using the following fluxes
$$
\mathcal{F}_{i+\frac{1}{2}}= \frac{r_{i+1} A_1^{i+1,n} +r_{i} A_1^{i,n}}{2}- \frac{\Delta \theta}{2\Delta t} (r_{i+1}-r_i),
$$
yields less accurate estimate than previous schemes from the first iterations ($t\in[0,5]$) on the first test case.

 Secondly, a conservative formulation has been investigated for the hyperbolic equation which writes as follows 
$$
\partial_t r(t,\theta) + \partial_\theta G(r(t\,\theta), \theta)=A_2[r] + F(r(t,\theta),\theta),
$$
with $ \partial_r G(r,\theta)= A_1[r]$, $F(r,\theta)= \partial_\theta G(r,\theta)$. An analogous Lax-Friedrichs scheme with a discretization of the additional source term has been implemented but yields less accurate results from the first iterations ($t\in[0,2.5]$) on the first test case.

A more precise investigation of an adapted scheme for the hyperbolic equation would be interesting. In particular one of the main purposes is to ensure the steady state approximation, the positivity and the volume conservation. Keeping in mind that one of the goals is to consider different initial shapes for the droplet such as ellipsoids which correspond to the following initial conditions for instance 
\begin{eqnarray*}
 r_0(\theta)= \frac{1}{\sqrt{1-\frac{3}{4}\cos^2(\theta)}},& \displaystyle r_0(\theta)= \frac{1}{\sqrt{1-\frac{3}{4}\sin^2(\theta)}}, &\theta \in [0,\pi],
\end{eqnarray*}
depending on the considered orientation of the ellipsoid. 
\section*{Acknowledgement}
The author would like to thank Matthieu Hillairet for introducing the subject and sharing his experience. The author expresses her gratitude to Anne-Laure Dalidard for the fruitful discussions and for helping overcoming the difficulties. The author is also grateful to Jacques Sainte-Marie for helping with the numerical part. 

This project has received funding from the European Research Council (ERC) under the European Union’s Horizon 2020 research and innovation program Grant agreement No 637653, project BLOC “Mathematical Study of Boundary Layers in Oceanic Motion”. This work was supported by the SingFlows project, grant ANR-18- CE40-0027 of the French National Research Agency (ANR).        
\newpage
\appendix
\section{Summary of formulas for the operators $A_1[r]$, $A_2[r]$ and $\mathcal{U}[r]$}\label{appendiceA}

\begin{eqnarray*}
e(\theta,0)=\begin{pmatrix}\sin(\theta)\\0\\\cos(\theta))\end{pmatrix}
\end{eqnarray*}
\begin{eqnarray*}
A_1[r](\theta) ={ \frac{1}{r(\theta)} ( \mathcal{U}[r](\theta)-\dot{c})\cdot \partial_\theta e(\theta,0)}, & A_2[r](\theta)={ ( \mathcal{U}[r](\theta)-\dot{c})\cdot e(\theta,0)} 
\end{eqnarray*}
\begin{align*}
\mathcal{U}[r]_1(\theta)&= -\frac{1}{8\pi} \int_0^{2\pi} \int_0^\pi \mathcal{K}(\bar \theta, \theta, \phi) \Big \{ r( \theta) cos( \theta) - r(\bar \theta) \cos(\bar \theta) \Big\} \cos(\bar \phi) d\bar \theta d\bar \phi\\ \\
\mathcal{U}[r]_2(\theta)&= -\frac{1}{8\pi} \int_0^{2\pi} \int_0^\pi \mathcal{K}(\bar \theta, \theta, \phi) \Big\{r( \theta) cos( \theta) - r(\bar \theta) \cos(\bar \theta)\Big\} \sin(\bar \phi) d\bar \theta d\bar \phi\\ \\
\mathcal{U}[r]_3(\theta) &= -\frac{1}{8\pi} \int_0^{2\pi} \int_0^\pi \mathcal{K}(\bar \theta, \theta, \phi)\Big\{- r( \theta) \sin ( \theta) \cos(\bar \phi) +r(\bar \theta) \sin (\bar \theta) \Big\} d\bar \theta d\bar \phi \\ \\
\mathcal{K}(\bar \theta, \theta, \phi)&=  \frac{r( \bar \theta) \sin(\bar \theta) - r'(\bar \theta) \cos(\bar \theta)}{\beta[r](\bar \theta, \theta, \phi)}r(\bar \theta) \sin (\bar \theta) \\ \\
\beta^2[r](\bar \theta, \theta, \phi)&= r^2(\theta)+r^2(\bar \theta) -2 r(\theta) r(\bar \theta) \Big(cos(\bar  \phi) \sin (\theta) \sin (\bar \theta)+ \cos (\theta) \cos(\bar \theta) \Big)\\ \\
\mathcal{U}[r](\theta)\cdot e(\theta,0) &= -\frac{1}{8\pi} \int_0^{2\pi} \int_0^\pi \mathcal{K}(\bar \theta, \theta, \phi)r(\bar \theta) \Big(-\sin( \theta)\cos(\bar \theta)  \cos(\bar  \phi)\\
&+\cos( \theta)\sin(\bar \theta) \Big ) d\bar \theta d\bar \phi\\ \\
\mathcal{U}[r](\theta)\cdot \partial_\theta e(\theta,0) &= -\frac{1}{8\pi} \int_0^{2\pi} \int_0^\pi \mathcal{K}(\bar \theta, \theta, \phi) \Big( r( \theta) \cos(\bar \phi) \\
&-r(\bar \theta)\Big \{ \cos(\bar \theta) \cos(\theta)\cos(\bar \phi)+\sin (\bar \theta) \sin(\theta)  \Big \} \Big) d\bar \theta d\bar \phi
\end{align*}

\section{Proof of technical lemmas}

\begin{lemme}\label{lemme_int_S2}
There exists a positive constant $C>0$. satisfying
$$
\underset{ \theta \in[0,\pi]}{\sup} \left( \int_{[0,\pi]\times[0,2\pi]} \frac{ \sin (\bar \theta) }{|e(\bar \theta,\bar \phi)-e( \theta,0) |} d \bar \theta d \bar \phi+\int_{[0,\pi]\times[0,2\pi]} \frac{\sin(\bar \theta) d\bar  \theta d \bar \phi}{\sqrt{1-e( \theta,0)\cdot e(\bar \theta,\bar \phi)^2}} \right)  \leq C .
$$
\end{lemme}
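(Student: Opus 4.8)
The plan is to recognise both integrals as surface integrals over the unit sphere of a function depending only on the scalar product $e(\theta,0)\cdot\omega$, and then to exploit rotational invariance to reduce the supremum to a single explicit computation at $\theta=0$.

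First I would rewrite the integrals geometrically. Setting $\omega=e(\bar\theta,\bar\phi)\in\partial B(0,1)$, the factor $\sin(\bar\theta)\,d\bar\theta\,d\bar\phi$ is precisely the surface measure $d\sigma(\omega)$ on the unit sphere. Since $|\omega|=|e(\theta,0)|=1$, one has $|e(\bar\theta,\bar\phi)-e(\theta,0)|^2=2\bigl(1-e(\theta,0)\cdot\omega\bigr)$, so both integrands are functions of the single quantity $e(\theta,0)\cdot\omega$ alone. Hence each integral has the form $I(\xi):=\int_{\partial B(0,1)} f(\xi\cdot\omega)\,d\sigma(\omega)$ with $\xi=e(\theta,0)$, for an appropriate scalar function $f$.

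The key step is that $I(\xi)$ does not depend on $\xi\in\partial B(0,1)$. Indeed, for any rotation $R$ fixing the origin, the change of variable $\omega\mapsto R\omega$ leaves $d\sigma$ invariant and replaces $\xi\cdot\omega$ by $(R^{-1}\xi)\cdot\omega$; choosing $R$ with $R^{-1}\xi=e_3$ gives $I(\xi)=I(e_3)$ for every $\xi$. Consequently the supremum over $\theta\in[0,\pi]$ is attained trivially and equals the value at $\xi=e_3$, where $e_3\cdot\omega=\cos(\bar\theta)$.

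It then remains to evaluate the two one-dimensional integrals obtained after integrating out $\bar\phi$ (a factor $2\pi$). For the first, $|e_3-\omega|=\sqrt{2(1-\cos\bar\theta)}=2\sin(\bar\theta/2)$, so $\int_0^\pi \sin(\bar\theta)/(2\sin(\bar\theta/2))\,d\bar\theta=\int_0^\pi\cos(\bar\theta/2)\,d\bar\theta=2$, giving $4\pi$. For the second, $1-(e_3\cdot\omega)^2=\sin^2(\bar\theta)$, and since $\sin(\bar\theta)\ge 0$ on $[0,\pi]$ the integrand reduces to the constant $1$, yielding $2\pi^2$; in particular the apparent singularities at the two poles cancel against the vanishing of $\sin(\bar\theta)$. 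Adding the two estimates provides the claim with $C=4\pi+2\pi^2$. I do not expect any genuine obstacle here: the only point requiring a little care is the rotational-invariance reduction (and checking that the second integrand is bounded at the poles), after which the computation is entirely elementary.
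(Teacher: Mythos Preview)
Your proposal is correct and follows essentially the same approach as the paper: rewrite the integrals as surface integrals on $\partial B(0,1)$, use a rotation sending $e(\theta,0)$ to $e_3$ together with the rotational invariance of $d\sigma$ to remove the $\theta$-dependence, and then evaluate the resulting one-dimensional integrals (the paper also obtains $4\pi$ for the first and merely notes that the same reduction handles the second, while you additionally compute it to be $2\pi^2$).
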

\begin{proof}
In fact we can show a stronger result. The idea is to note that 
$$
 \int_{[0,\pi]\times[0,2\pi]} \frac{ \sin (\bar \theta) d \bar \theta d\bar \phi}{|e(\bar \theta,\bar \phi)-e( \theta,0) |} d \bar \theta d\bar \phi= \int_{\partial B(0,1)} \frac{d \sigma(y)}{|e(\theta,0) - y |}.
$$
Let $\theta \in [0,\pi]$. We set $Q(\theta)=\begin{pmatrix}
\cos(\theta) & 0& \sin(\theta) \\
0&1&0\\
-\sin(\theta) & 0& \cos(\theta)
\end{pmatrix}$ the rotation matrix such that $e(\theta,0)=Q(\theta) e_3$ with $e_3=(0,0,1)$ and use the change of variable $y=Q(\theta) \omega$, $\omega \in \partial B(0,1)$ such that  $|Q(e_3-\omega)| =|(e_3-\omega)| $ and $d\sigma(y)=d\sigma(w)$.
This yields 
$$
\int_{\partial B(0,1)} \frac{d \sigma(y)}{|e(\theta,0) - y |}=\int_{\partial B(0,1)} \frac{d \sigma(y)}{|e_3 - y |} =4\pi.
$$
We apply the same idea for the second integral using the fact that $e(\theta,0)\cdot e(\bar \theta,\bar \phi)= Q(\theta) e_3 \cdot Q(\theta) \omega = e_3 \cdot \omega $.
\end{proof}

          \end{document}